\newtheorem{thm}{Theorem}[section]
\newtheorem{corollary}[thm]{Corollary}
\newtheorem{lemma}[thm]{Lemma}
\theoremstyle{definition}
\newtheorem{defn}{Definition}[section]
\theoremstyle{remark}
\newtheorem{rem}{Remark}[section]
\newtheorem{example}{Example}[section]
\numberwithin{equation}{section}
\begin{document}
\begin{center}\large{{\bf{Application of Geometric Calculus in Numerical Analysis and Difference Sequence Spaces }}} 
\vspace{0.5cm}

Khirod Boruah and Bipan Hazarika$^{\ast}$ 

\vspace{0.5cm}
Department of Mathematics, Rajiv Gandhi University, Rono Hills, Doimukh-791112, Arunachal Pradesh, India\\

Email: khirodb10@gmail.com; bh\_rgu@yahoo.co.in
\thanks{$^{\ast}$ The corresponding author.}
\end{center}
\title{}
\author{}
\thanks{{\today}}
\begin{abstract} The main purpose of this paper is to introduce the geometric difference sequence space
 $l_\infty^{G} (\Delta_G)$  and prove that $l_\infty^{G} ({\Delta}_{G})$ is a Banach space with respect to the norm $\left\|.\right\|^G_{{\Delta}_G}.$ Also we compute the $\alpha$-dual, $\beta$-dual and $\gamma$-dual spaces. Finally we obtain the Geometric Newton-Gregory interpolation formulae.\\
\parindent=5mm
\noindent{\footnotesize {\bf{Keywords and phrases:}}} Difference sequence spaces; dual space; Geometric Calculus; interpolation formula.\\
{\footnotesize {\bf{AMS subject classification \textrm{(2000)}:}}} 26A06, 11U10, 08A05, 46A45.
\end{abstract}
\maketitle

\maketitle

\pagestyle{myheadings}
\markboth{\rightline {\scriptsize  Khirod Boruah, Bipan Hazarika}}
        {\leftline{\scriptsize  Geometric difference sequence spaces over ...}}

\maketitle
\section{Introduction and Notations}
In 1967 Robert Katz and Michael Grossman created the first system of non-Newtonian calculus, which we call the geometric calculus. In 1970 they had created an infinite family of non-Newtonian calculi, each of which differs markedly from the classical calculus of Newton and Leibniz. Among other things, each non-Newtonian calculus possesses four operators : a gradient (i.e. an average rate of change), a derivative, an average, and an integral. For each non-Newtonian calculus  there is a characteristic class of functions having a constant derivative.

In view of pioneering work carried out in this area by Grossman and Katz \cite{GrossmanKatz}  we will call this calculus as multiplicative calculus, although the term of exponential calculus can also
be used. The operations of multiplicative calculus will be called as multiplicative derivative and multiplicative integral. We refer to Grossman and Katz \cite{GrossmanKatz}, Stanley \cite{Stanley}, Bashirov et
al. \cite{BashirovMisirh,BashirovKurpinar}, Grossman  \cite{Grossman83} for elements of multiplicative calculus and
its applications. An extension of multiplicative calculus to functions of complex variables is
handled in Bashirov and R\i za \cite{BashirovRiza}, Uzer \cite{Uzer10}, Bashirov et al. \cite{BashirovKurpinar}, \c{C}akmak and Ba\c{s}ar \cite{CakmakBasar}, Tekin and Ba\c{s}ar\cite{TekinBasar}, T\"{u}rkmen and Ba\c{s}ar \cite{TurkmenBasar}. In \cite{KADAK3},  Kadak and  \"{O}zl\"{u}k studied the generalized Runge-Kutta method with
respect to non-Newtonian calculus.  Kadak et al \cite{KadakEfe,kadak2} studied certain new types of sequence spaces over the Non-Newtonian Complex Field.

Geometric calculus is an alternative to the usual calculus of Newton and Leibniz. It provides differentiation and integration tools based on multiplication instead of addition. Every property in Newtonian calculus has an analog in multiplicative calculus. Generally speaking multiplicative calculus is a methodology that allows one to have a different look at problems which can be investigated via calculus. In some cases, for example for growth related problems, the use of multiplicative calculus is advocated instead of a traditional Newtonian one.
 
The main aim of this paper is to construct the difference sequence space $l_\infty^{G} \left({\Delta}_G\right)$ over geometric  complex numbers which forms a Banach space with the norm defined on it and obtain the Geometric Newton-Gregory interpolation formulae which are more useful than Newton-Gregory interpolation formulae.

We should know that all concepts in classical arithmetic have natural counterparts in $\alpha-arithmetic.$ Consider any generator $\alpha$ with range $A\subseteq \mathbb{C}.$ By $\alpha- arithmetic,$ we mean the arithmetic whose domain is $A$ and operations are defined as follows. 
For $x, y \in A$ and any generator $\alpha,$
\begin{align*}
&\alpha -addition &x\dot{+}y &=\alpha[\alpha^{-1}(x) + \alpha^{-1}(y)]\\
&\alpha-subtraction &x\dot{-}y&=\alpha[\alpha^{-1}(x) - \alpha^{-1}(y)]\\
&\alpha-multiplication &x\dot{\times}y &=\alpha[\alpha^{-1}(x) \times \alpha^{-1}(y)]\\
&\alpha-division &\dot{x/y}&=\alpha[\alpha^{-1}(x) / \alpha^{-1}(y)]\\
&\alpha-order &x\dot{<}y &\Leftrightarrow \alpha^{-1}(x) < \alpha^{-1}(y).
\end{align*}
If we choose \textit{$exp$} as an $\alpha-generator$ defined by $\alpha (z)= e^z$ for $z\in \mathbb{C}$ then $\alpha^{-1}(z)=\ln z$ and $\alpha-arithmetic$ turns out to Geometric arithmetic.
\begin{align*}
&\alpha -addition &x\oplus y &=\alpha[\alpha^{-1}(x) + \alpha^{-1}(y)]& = e^{(\ln x+\ln y)}& =x.y ~geometric ~addition\\
&\alpha-subtraction &x\ominus y&=\alpha[\alpha^{-1}(x) - \alpha^{-1}(y)]&= e^{(\ln x-\ln y)} &=  x\div y, y\ne 0 ~geometric ~subtraction\\
&\alpha-multiplication &x\odot y &=\alpha[\alpha^{-1}(x) \times\alpha^{-1}(y)]& = e^{(\ln x\times\ln y)} & = ~x^{\ln y}~ geometric ~multiplication\\
&\alpha-division &x\oslash y&=\alpha[\alpha^{-1}(x) / \alpha^{-1}(y)] & = e^{(\ln x\div \ln y)}& = x^{\frac{1}{\ln y}}, y\ne 1 ~ geometric ~division.
\end{align*}
In \cite{TurkmenBasar} defined the geometric complex numbers $\mathbb{C}(G)$ as follows:
\[\mathbb{C}(G):=\{ e^{z}: z\in \mathbb{C}\} = \mathbb{C}\backslash \{0\}.\]
Then $(\mathbb{C}(G), \oplus, \odot)$ is a field with geometric zero $1$ and geometric identity $e.$\\
Then for all $x, y\in \mathbb{C}(G)$
\begin{itemize}
\item{ $x\oplus y=xy$}
\item{ $x\ominus y=x/y$}
\item{ $x\odot y=x^{\ln y}=y^{\ln x}$}
\item{ $x\oslash y$ or $\frac{x}{y}G=x^{\frac{1}{\ln y}}, y\neq 1$}
\item{ $x^{2_G}= x \odot x=x^{\ln x}$}
\item{ $x^{p_G}=x^{\ln^{p-1}x}$}
\item{ ${\sqrt{x}}^G=e^{(\ln x)^\frac{1}{2}}$}
\item{ $x^{-1_G}=e^{\frac{1}{\log x}}$}
\item{ $x\odot e=x$ and $x\oplus 1= x$}
\item{ $e^n\odot x=x^n=x\oplus x\oplus .....(\text{upto $n$ number of $x$})$}
\item{
\begin{equation*}
\left|x\right|^G=
\begin{cases}
x, &\text{if $x>1$}\\
1,&\text{if $x=1$}\\
\frac{1}{x},&\text{if $x<1$}
\end{cases}
\end{equation*}}
Thus $\left|x\right|^G\geq 1.$
\item{ ${\sqrt{x^{2_G}}}^G=\left|x\right|^G$}
\item{ $\left|e^y\right|^G=e^{\left|y\right|}$}
\item{ $\left|x\odot y\right|^G=\left|x\right|^G \odot \left|y\right|^G$}
\item{ $\left|x\oplus y\right|^G \leq\left|x\right|^G \oplus \left|y\right|^G$}
\item{ $\left|x\oslash y\right|^G=\left|x\right|^G \oslash \left|y\right|^G$}
\item{ $\left|x\ominus y\right|^G\geq\left|x\right|^G \ominus \left|y\right|^G$}
\item{ $0_G \ominus 1_G\odot\left(x \ominus y\right)=y\ominus x\,, i.e.$ in short $\ominus \left(x \ominus y\right)= y\ominus x.$}
\end{itemize}
 Let $l_{\infty},c$ and $c_0$ be the linear spaces of complex bounded, convergent and null  sequences, respectively, normed by
\[||x||_\infty=\sup_k|x_k|.\]
 T\"{u}rkmen and Ba\c{s}ar \cite{TurkmenBasar}  have proved that
\[\omega(G)=\{(x_k): x_k \in \mathbb{C}(G)\, \text{for all}\, k\in \mathbb{N}\}\]
is a vector space over $\mathbb{C}(G)$ with respect to the algebraic operations $\oplus$ addition and $\odot$ multiplication 
\begin{align*}
\oplus : \omega(G) \times \omega (G) &\rightarrow \omega (G)\\
                          (x, y)&\rightarrow x \oplus y =(x_k) \oplus (y_k)=(x_ky_k)\\
\odot : \mathbb{C(G)} \times \omega (G) &\rightarrow \omega (G)\\
													(\alpha, y)&\rightarrow \alpha \odot y=\alpha \odot (y_k)=(\alpha^{\ln y_k}),
\end{align*}
where $x=(x_k), y=(y_k) \in \omega (G)$ and $\alpha \in \mathbb{C}(G).$ Then
\begin{align*}
l_\infty(G) &=\{x=(x_k) \in \omega (G): \sup_{k\in \mathbb{N}}|x_k|^G< \infty\}\\
c(G)        &= \{x=(x_k) \in \omega (G): {_G\lim_{k\rightarrow \infty}}|x_k\ominus l|^G=1\}\\
c_0(G)      &= \{x=(x_k) \in \omega (G): {_G\lim_{k\rightarrow \infty}} x_k=1\}, \text{where $_G\lim$ is the geometric limit}\\
l_p(G)      &= \{x=(x_k) \in \omega (G):{_G\sum^\infty_{k=0}}\left(|x_k|^G\right)^{p_G} <\infty\}, \text{~where ${_G\sum}$ is the geometric sum},
\end{align*}
are classical sequence spaces over the field $\mathbb{C}(G).$ Also it is shown that $l_{\infty}(G),$ $c(G)$ and $c_0(G)$ are Banach spaces with the norm 
\[||x||^{G}=\sup_{k}|x_k|^{G}, x=(x_1,x_2,x_3,...)\in \lambda(G), \lambda\in \{l_{\infty},c, c_0\}.\]
For the convenience, in this paper we denote $l_\infty(G), c(G), c_0(G),$ respectively as $l_\infty^G, c^G, c_0^G.$
\section{New geometric sequence space}
In 1981, Kizmaz \cite{Kizmaz} introduced the notion of difference sequence spaces using forward difference operator $\Delta$ and studied the classical difference sequence spaces $\ell _{\infty }(\Delta ),$ $c(\Delta
),$ $c_{0}(\Delta ).$ In this section we define the following new geometric sequence space 
\[l_\infty^G(\Delta_G)= \{x=(x_k) \in \omega (G): \Delta_G x\in l_\infty^G\},
\text{~where~} {\Delta}_G x=x_k \ominus x_{k+1}.\]
\begin{thm}\label{eight} The space  $l_\infty^{G} \left({\Delta}_G\right)$
is a normed linear space w.r.t. the norm
\begin{equation*}
\left\|x\right\|^G_{{\Delta}_G}=\left|x_1\right|^G\oplus\left\|{\Delta}_Gx\right\|^G_\infty.
\end{equation*}
\end{thm}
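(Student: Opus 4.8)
The plan is to split the proof into two parts: first verifying that $l_\infty^G(\Delta_G)$ is a linear subspace of $\omega(G)$ under the inherited operations $\oplus$ and $\odot$, and then checking that $\|\cdot\|^G_{\Delta_G}$ satisfies the three norm axioms, read in the geometric arithmetic (so the zero element is the geometric zero $1$, and the geometric order coincides with the usual order on the positive reals transported through the generator $\ln$). The linearity rests on the two commutation identities
\[
\Delta_G(x\oplus y)=\Delta_G x\oplus\Delta_G y,\qquad \Delta_G(\alpha\odot x)=\alpha\odot\Delta_G x,
\]
which I would derive directly from $\Delta_G x_k=x_k\ominus x_{k+1}=x_k/x_{k+1}$ using $(x\oplus y)_k=x_ky_k$ and $(\alpha\odot x)_k=\alpha^{\ln x_k}$; both collapse to additivity of $\ln$. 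Since $l_\infty^G$ is already a linear space over $\mathbb{C}(G)$, these identities immediately yield closure of $l_\infty^G(\Delta_G)$ under $\oplus$ and $\odot$.

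For definiteness I would use that every geometric modulus is $\ge 1$, so $|x_1|^G\ge 1$ and $\|\Delta_G x\|^G_\infty\ge 1$; as $\oplus$ is ordinary multiplication their product $\|x\|^G_{\Delta_G}$ is $\ge 1$ and equals the geometric zero $1$ exactly when both factors equal $1$. The condition $|x_1|^G=1$ forces $x_1=1$, and $\|\Delta_G x\|^G_\infty=1$ forces $x_k/x_{k+1}=1$, i.e. $x_k=x_{k+1}$, for every $k$; together these give $x=(1,1,1,\dots)$, the geometric zero sequence.

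The homogeneity identity $\|\alpha\odot x\|^G_{\Delta_G}=|\alpha|^G\odot\|x\|^G_{\Delta_G}$ is the step I expect to be the main obstacle. Using the second commutation identity together with $|u\odot v|^G=|u|^G\odot|v|^G$, the matter reduces to showing that the supremum passes through $\odot$, namely $\sup_k\big(|\alpha|^G\odot a_k\big)=|\alpha|^G\odot\sup_k a_k$ for $a_k=|(\Delta_G x)_k|^G\ge 1$. This is not automatic because $\odot$ is not ordinary multiplication; I would justify it by writing $u\odot v=\exp(\ln u\cdot\ln v)$ and noting that, since $\ln|\alpha|^G\ge 0$, the map $t\mapsto|\alpha|^G\odot t$ is monotone non-decreasing in the usual order, so the supremum commutes with it. Feeding this and the bound $|(\alpha\odot x)_1|^G=|\alpha|^G\odot|x_1|^G$ into the definition, the distributive law $a\odot(b\oplus c)=(a\odot b)\oplus(a\odot c)$ (once more additivity of $\ln$) assembles the two terms into $|\alpha|^G\odot\big(|x_1|^G\oplus\|\Delta_G x\|^G_\infty\big)$.

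The triangle inequality follows by combining the geometric triangle inequality $|x_1y_1|^G=|x_1\oplus y_1|^G\le|x_1|^G\oplus|y_1|^G$ with the triangle inequality already available for $\|\cdot\|^G_\infty$ on $l_\infty^G$, applied to $\Delta_G(x\oplus y)=\Delta_G x\oplus\Delta_G y$. Multiplying these two inequalities is legitimate because all quantities are positive reals $\ge 1$ and ordinary multiplication is monotone; commutativity and associativity of $\oplus$ then regroup the four factors into $\|x\|^G_{\Delta_G}\oplus\|y\|^G_{\Delta_G}$, completing the verification.
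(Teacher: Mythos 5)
Your proposal is correct and takes essentially the same route as the paper: a direct verification of the norm axioms in geometric arithmetic, using $\left|\cdot\right|^G\geq 1$ for positivity and definiteness, the geometric triangle inequality $\left|u\oplus v\right|^G\leq\left|u\right|^G\oplus\left|v\right|^G$ together with monotonicity of ordinary multiplication for the triangle inequality, and the identity $\Delta_G(\alpha\odot x)=\alpha\odot\Delta_G x$ with distributivity of $\odot$ over $\oplus$ for homogeneity. If anything, you are more careful than the paper at two points it glosses over, namely the closure of $l_\infty^G(\Delta_G)$ under $\oplus$ and $\odot$, and the justification that the supremum commutes with the monotone map $t\mapsto\left|\alpha\right|^G\odot t$.
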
 
\begin{proof} For $x=(x_k), y=(y_k) \in l_\infty^{G} \left({\Delta}_G\right),$
\begin{align*}
N1.\quad \left\|x\right\|^G_{{\Delta}^G} &=\left|x_1\right|^G\oplus\left\|{\Delta}^Gx\right\|^G_\infty\\
&=\left|x_1\right|^G.\sup_k\left|x_k\ominus x_{k+1}\right|^G\\
&\geq 1, \quad \text{since $\left|x_1\right|^G\geq 1$ and $\left|x_k\ominus x_{k+1}\right|^G\geq 1.$}
\end{align*}
\begin{align*}
N2. \quad \left\|x\right\|^G_{{\Delta}_G} =1 &\Leftrightarrow \left|x_1\right|^G\oplus\left\|{\Delta}_Gx\right\|^G_\infty=1\\
&\Leftrightarrow \left|x_1\right|^G.\sup_k\left|x_k\ominus x_{k+1}\right|^G=1 ~ \forall k\\
&\Leftrightarrow \left|x_1\right|^G=1  \text{~and $\left|x_k\ominus x_{k+1}\right|^G= 1$}\\
&\Leftrightarrow x_1=1  \text{~and $x_k\ominus x_{k+1}=1 ~ \forall k$}\\
&\Leftrightarrow x_1=1 \text{~and $x_k\slash x_{k+1}=1 ~ \forall k$}\\
&\Leftrightarrow x_1=1  \text{~and $x_k= x_{k+1} ~~ \forall k$}\\
&\Leftrightarrow x_k=1 ~\forall k\\
 &\Leftrightarrow x=(1,1,1,1,.........)=0_G.
\end{align*}
\begin{align*}
N3. \quad  \left\|x\oplus y\right\|^G_{{\Delta}_G}&=\left|x_1\oplus y_1\right|^G\oplus \left\|{\Delta}_G(x_k\oplus y_k)\right\|^G_\infty\\
&=\left|x_1\oplus y_1\right|^G\oplus \left\|{\Delta}_G(x_ky_k)\right\|^G_\infty \\
&=\left|x_1\oplus y_1\right|^G\oplus \sup_k\left|x_ky_k\ominus x_{k+1}y_{k+1}\right|^G\\
&=\left|x_1\oplus y_1\right|^G\oplus \sup_k\left|\frac{x_ky_k}{ x_{k+1}y_{k+1}}\right|^G\\
&=\left|x_1\oplus y_1\right|^G\oplus \sup_k\left|\frac{x_k}{x_{k+1}}.\frac{ y_k}{y_{k+1}}\right|^G\\
&=\left|x_1\oplus y_1\right|^G\oplus \sup_k\left|\frac{x_k}{x_{k+1}}\oplus\frac{ y_k}{y_{k+1}}\right|^G\\
&\leq\left|x_1\oplus y_1\right|^G\oplus \sup_k\left\{\left|\frac{x_k}{x_{k+1}}\right|^G\oplus\left|\frac{ y_k}{y_{k+1}}\right|^G\right\}\\
&=\left|x_1\oplus y_1\right|^G\oplus \sup_k\left\{\left|x_k\ominus x_{k+1}\right|^G\oplus\left| y_k\ominus{y_{k+1}}\right|^G\right\}\\
&=\left|x_1\oplus y_1\right|^G\oplus \sup_k\left\{\left|{{\Delta}_G}x\right|^G\oplus\left|{{\Delta}_G}y\right|^G\right\}\\
&\leq\left|x_1\right|^G\oplus \left|y_1\right|^G\oplus \sup_k\left\{\left|{{\Delta}_G}x\right|^G\right\}\oplus \sup_k\left\{\left|{{\Delta}_G}y\right|^G\right\}\\
&=\left[\left|x_1\right|^G\oplus \sup_k\left\{\left|{{\Delta}_G}x\right|^G\right\}\right]\oplus \left[\left|y_1\right|^G\oplus \sup_k\left\{\left|{{\Delta}_G}y\right|^G\right\}\right]\\
&= \left\|x\right\|^G_{{\Delta}_G} \oplus \left\|y\right\|^G_{{\Delta}_G}.				
\end{align*}							
\begin{align*}
N4.\quad						
      \left\|\alpha\odot x\right\|^G_{{\Delta}^G} &=\left|\alpha\odot x_1\right|^G\oplus\left\|{\Delta}_G(\alpha\odot x)\right\|^G_\infty, \quad \alpha\in \mathbb{C}(G)\\
																			                                            &=\left|\alpha\right|\odot \left|x_1\right|^G\oplus\left\|\alpha\odot x_k\ominus \alpha\odot x_{k+1}\right\|^G_\infty\\
																			                                            &=\left|\alpha\right|\odot \left|x_1\right|^G\oplus\left\|\alpha\odot (x_k\ominus x_{k+1})\right\|^G_\infty\\
																																									&=\left|\alpha\right|\odot \left|x_1\right|^G\oplus\left|\alpha\right|\odot\left\|x_k\ominus x_{k+1}\right\|^G_\infty\\
																																									&=\left|\alpha\right|\odot \left[\left|x_1\right|^G\oplus\left\|{\Delta}_G x\right\|^G_\infty\right]\\
																																									&=\left|\alpha\right|\odot\left\|x\right\|^G_{{\Delta}_G}.
\end{align*}
Thus $\left\|.\right\|^G_{{\Delta}_G}$ is a norm on $\mathbb{C}(G).$
\end{proof} 
\begin{thm} The space $l_\infty^{G} \left({\Delta}_G\right)$
is a Banach space w.r.t. the norm $\left\|.\right\|^G_{{\Delta}_G}.$
\end{thm}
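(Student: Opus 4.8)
The plan is to run the classical Kizmaz completeness argument, transported into geometric arithmetic through the generator $\exp$. Let $(x^n)$ be a Cauchy sequence in $l_\infty^{G}(\Delta_G)$, written $x^n=(x_k^n)_k$. Given a geometric $\varepsilon>1$, I would fix $N$ with $\left\|x^n\ominus x^m\right\|^G_{\Delta_G}<\varepsilon$ for all $m,n\geq N$. Since $\oplus$ is ordinary multiplication and every geometric absolute value is $\geq 1$, the factorisation
\[
\left\|x^n\ominus x^m\right\|^G_{\Delta_G}=\left|x_1^n\ominus x_1^m\right|^G\oplus\left\|\Delta_G x^n\ominus\Delta_G x^m\right\|^G_\infty
\]
forces each of the two factors below $\varepsilon$, so $\left|x_1^n\ominus x_1^m\right|^G<\varepsilon$ and $\left|\Delta_G x_k^n\ominus\Delta_G x_k^m\right|^G<\varepsilon$ for every $k$. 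Here I use that $\Delta_G$ respects $\ominus$ coordinatewise, i.e. $\Delta_G(x^n\ominus x^m)=\Delta_G x^n\ominus\Delta_G x^m$.

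First I would reconstruct each coordinate from the first term and the geometric differences. The telescoping identity $x_k=x_1\ominus{_G\sum_{j=1}^{k-1}}\Delta_G x_j$ holds because $x_j\ominus x_{j+1}=x_j/x_{j+1}$ and the geometric sum of these quotients collapses to $x_1/x_k$. Consequently
\[
x_k^n\ominus x_k^m=(x_1^n\ominus x_1^m)\ominus{_G\sum_{j=1}^{k-1}}\bigl(\Delta_G x_j^n\ominus\Delta_G x_j^m\bigr),
\]
and applying the geometric triangle inequality factor by factor gives $\left|x_k^n\ominus x_k^m\right|^G<\varepsilon^{k}$. Thus, for each fixed $k$, the sequence $(x_k^n)_n$ is geometric Cauchy in $\mathbb{C}(G)$, which is geometrically complete (being the image of the complete field $\mathbb{C}$ under the generator $\exp$), so $x_k^n\to x_k$ geometrically. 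Set $x=(x_k)$.

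Next I would let $m\to\infty$ in the two bounds obtained above. By continuity of $\ominus$ and of $\left|\cdot\right|^G$ under the geometric limit, $\left|x_1^n\ominus x_1\right|^G\leq\varepsilon$ and $\left|\Delta_G x_k^n\ominus\Delta_G x_k\right|^G\leq\varepsilon$ for every $k$, hence $\left\|\Delta_G x^n\ominus\Delta_G x\right\|^G_\infty\leq\varepsilon$. Taking $\oplus$ of the two estimates yields $\left\|x^n\ominus x\right\|^G_{\Delta_G}\leq\varepsilon^{2}$ for $n\geq N$, so $x^n\to x$ in $\left\|\cdot\right\|^G_{\Delta_G}$. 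Finally $x$ lands back in the space, since
\[
\left\|\Delta_G x\right\|^G_\infty\leq\left\|\Delta_G x^N\right\|^G_\infty\oplus\left\|\Delta_G x\ominus\Delta_G x^N\right\|^G_\infty<\infty,
\]
so $\Delta_G x\in l_\infty^{G}$, i.e. $x\in l_\infty^{G}(\Delta_G)$.

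The chain of triangle-inequality estimates is routine; the step I expect to need genuine care is the passage $m\to\infty$ at fixed $n$. One must verify that the inequality $\left|\Delta_G x_k^n\ominus\Delta_G x_k^m\right|^G<\varepsilon$ survives the limit uniformly in $k$, so that the resulting bound on the supremum $\left\|\Delta_G x^n\ominus\Delta_G x\right\|^G_\infty$ is genuinely uniform, and that the coordinatewise limit $x$ returns to $l_\infty^{G}(\Delta_G)$ rather than merely existing coordinatewise. This is precisely where completeness of $\mathbb{C}(G)$ together with the geometric interchange of limit and supremum is invoked.
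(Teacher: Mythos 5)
Your proof is correct and follows essentially the same route as the paper: extract coordinatewise Cauchy sequences, invoke completeness of $\mathbb{C}(G)$ to get a candidate limit $x=(x_k)$, pass to the limit $m\to\infty$ in the uniform-in-$k$ estimates to get norm convergence $\left\|x^n\ominus x\right\|^G_{\Delta_G}\leq\varepsilon^2$, and finally verify membership via the triangle inequality against $x^N$. In fact your telescoping identity $x_k=x_1\ominus{_G\sum_{j=1}^{k-1}}\Delta_G x_j$ with the resulting bound $\left|x_k^n\ominus x_k^m\right|^G<\varepsilon^k$ supplies explicitly the step the paper only asserts (that the norm condition forces each coordinate sequence to be Cauchy), so your write-up is, if anything, more complete than the original.
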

\begin{proof}
Let $(x_n)$ be a Cauchy sequence in $l_\infty^{G} \left({\Delta}_G\right),$ where  
$x_n= \left(x_k^{(n)}\right)=\left(x_1^{(n)}, x_2^{(n)}, x_3^{(n)},........\right)$ $\forall n \in \mathbb{N},x_k^{(n)}$ is the $k^{th}$ coordinate of $x_n.$ Then
\begin{align*}
\left\|x_n\ominus x_m\right\|^G_{{\Delta}_G}&=\left|x_1^{(n)}\ominus x_1^{(m)}\right|^G \oplus \left\|{\Delta}_G x_n\ominus {\Delta}_Gx_m\right\|^G_\infty \rightarrow 1 \text{~as $m, n\rightarrow\infty$}\\
&=\left|x_1^{(n)}\ominus x_1^{(m)}\right|^G \oplus \left\|(x_k^{(n)}\ominus x_{k+1}^{(n)})\ominus (x_k^{(m)}\ominus x_{k+1}^{(m)})\right\|^G_\infty\rightarrow \, 1\\
&=\left|x_1^{(n)}\ominus x_1^{(m)}\right|^G \oplus \left\|(x_k^{(n)}\ominus x_k^{(m)})\ominus (x_{k+1}^{(n)}\ominus x_{k+1}^{(m)})\right\|^G_\infty\rightarrow \, 1\\
&=\left|x_1^{(n)}\ominus x_1^{(m)}\right|^G \oplus \sup_k\left|(x_k^{(n)}\ominus x_k^{(m)})\ominus (x_{k+1}^{(n)}\ominus x_{k+1}^{(m)})\right|^G\rightarrow 1\text{~as $m, n\rightarrow \infty$}.
\end{align*}
This implies that $\left|x_k^{(n)}\ominus x_k^{(m)}\right|^G\rightarrow 1\mbox{~as~} n, m\rightarrow\infty~~ \forall~ k\in\mathbb{N},$ \text{~since $\left|x_k^{(n)}\ominus x_k^{(m)}\right|^G\geq 1.$}\\
Therefore for fixed $k,$ $k^{\text{th}}$ co-ordinates of all sequences form a Cauchy sequence in $\mathbb{C}(G)$\\
i.e. $x^{(n)}_k=(x^{(1)}_k, x^{(2)}_k, x^{(3)}_k,x^{(4)}_k,.........)$ is a Cauchy sequence. Then by the completeness of $\mathbb{C}(G), (x^{(n)}_k)$ converges to $x_k$ (say) as follows:
\[\begin{matrix}
x_1   &=(&x^{(1)}_1,&x^{(1)}_2,&x^{(1)}_3,&\cdots,&x^{(1)}_k,&\cdots)\\
x_2   &=(&x^{(2)}_1,&x^{(2)}_2,&x^{(2)}_3,&\cdots,&x^{(2)}_k,&\cdots)\\
x_3   &=(&x^{(3)}_1,&x^{(3)}_2,&x^{(3)}_3,&\cdots,&x^{(3)}_k,&\cdots)\\
\vdots&  &\vdots    &\vdots    &\vdots    &       &\vdots    & \\
x_m   &=(&x^{(m)}_1,&x^{(m)}_2,&x^{(m)}_3,&\cdots,&x^{(m)}_k,&\cdots)\\
\vdots&  &\vdots    &\vdots    &\vdots    &       &\vdots    & \\
x_n   &=(&x^{(n)}_1,&x^{(n)}_2,&x^{(n)}_3,&\cdots,&x^{(n)}_k,&\cdots)\\
\vdots&  &\vdots    &\vdots    &\vdots    &       &\vdots    & \\
\downarrow& &\downarrow&\downarrow&\downarrow& &\downarrow& \\
x     &=(&x_1,&x_2,&x_3,&\cdots,&x_k,&\cdots)
\end{matrix}\]
~i.e. \[{_G\lim_{n \to \infty}}x^{(n)}_k=x_k~ \forall k\in \mathbb{N}.\]
Further for each $\varepsilon> 1, \exists N=N(\varepsilon)$ s.t. $\forall \, n, m\geq N$ we have\\
\[|x^{(n)}_1 \ominus x^{(m)}_1|^G<\varepsilon,|x^{(n)}_{k+1} \ominus x^{(m)}_{k+1}\ominus (x^{(n)}_k \ominus x^{(m)}_k)|^G<\varepsilon \]
and \[{_G\lim_{m \to \infty}}|x^{(n)}_1 \ominus x^{(m)}_1|^G =|x^{(n)}_1 \ominus x_1|^G< \varepsilon.\]
This implies
 \[_G\lim_{m \to \infty}|(x^{(n)}_{k+1}\ominus x^{(m)}_{k+1})\ominus (x^{(n)}_k\ominus x^{(m)}_k)|^G= |(x^{(n)}_{k+1} \ominus x_{k+1})\ominus (x^{(n)}_k\ominus x_k)|^G <\varepsilon~ \forall~ n\geq N.\]
Since $\varepsilon$ is independent of $k,$
\begin{align*}
  &\sup_k|(x^{(n)}_{k+1} \ominus x_{k+1})\ominus (x^{(n)}_k\ominus x_k)|^G<\varepsilon.\\
\Rightarrow &\sup_k|(x^{(n)}_{k+1} \ominus x^{(n)}_k)\ominus (x_{k+1}\ominus x_k)|^G= \left\|\Delta_G x_n\ominus \Delta_G x\right\|^G_\infty <\varepsilon.
\end{align*}
Consequently we have $\left\|x_n\ominus x\right\|^G_{\Delta_G}=|x^{(n)}_1 \ominus x_1|^G \oplus \left\|\Delta_G x_n\ominus \Delta_G x\right\|^G_\infty < {\varepsilon}^2~ \forall~ n\geq N.$\\
Hence we obtain $x_n\rightarrow x$ as $n\rightarrow \infty.$ \\
Now we must show that $x\in l^G_\infty(\Delta_G).$
We have 
\begin{align*}
|x_k\ominus x_{k+1}|^G&=|x_k\ominus x^N_k\oplus x^N_k\ominus x^N_{k+1}\oplus x^N_{k+1}\ominus x_{k+1}|^G\\
                  &\leq |x^N_k\ominus x^N_{k+1}|^G\oplus ||x^N\ominus x||^G_{\Delta_G}= O(e).
\end{align*}
This implies $x=(x_k)\in l^G_\infty(\Delta_G).$
\end{proof}

Furthermore since $l^G_\infty(\Delta_G)$ is a Banach space with continuous coordinates (that is  $\left\|x_n\ominus x\right\|^\infty_{\Delta_G}\rightarrow 1$ implies $|x^{(n)}_k \ominus x_k|^G\rightarrow 1$ for each $k\in \mathbb{N},$ as $n\rightarrow \infty )$ it is a BK-space.

\begin{rem}
The spaces \begin{enumerate}
\item[(a)] $c^{G}(\Delta_{G})=\{(x_k)\in w(G): \Delta_{G}x_k\in c^{G}\}$
\item[(b)] $c_{0}^{G}(\Delta_{G})=\{(x_k)\in w(G): \Delta_{G}x_k\in c_{0}^{G}\}$
\end{enumerate} 
 are Banach spaces with respect to the norm $||.||^{G}_{\Delta_G}.$ Also these spaces are BK-space.
\end{rem}
Now we define $s: l^G_\infty(\Delta_G)\rightarrow l^G_\infty(\Delta_G), x\rightarrow sx=y=(1, x_2, x_3,....).$ It is clear that $s$ is a bounded linear operator on $l^G_\infty(\Delta_G)$ and $||s||^G_\infty =e.$ Also 
\[s\left[l^G_\infty(\Delta_G)\right] = sl^G_\infty(\Delta_G)=\{x=(x_k): x\in l^G_\infty(\Delta_G), x_1=1 \}\subset l^G_\infty(\Delta_G)\]
is a subspace of $l^G_\infty(\Delta_G)$ and as $|x_1|^G=1$ for $x_1=1$ we have
\[||x||^G_{\Delta_G}= ||\Delta_G x||^G_\infty \quad \text{in}\, sl^G_\infty(\Delta_G).\]
On the other hand we can show that
\begin{equation}\label{eqna}
\Delta_G :sl^G_\infty(\Delta_G)\rightarrow l^G_\infty
\end{equation}
\[x=(x_k)\rightarrow y=(y_k)=(x_k\ominus x_{k+1})\]
is a linear homomorphism. So $sl^G_\infty(\Delta_G)$ and $l^G_\infty$ are equivalent as topological space. $\Delta_G$ and $\Delta_G^{-1}$ are norm preserving and $||\Delta_G||^G_\infty=||\Delta_G^{-1}||^G_\infty =e.$

Let $\left[sl^G_\infty(\Delta_G)\right]^*$ and $\left[l^G_\infty\right]^*$ denote the continuous duals of $sl^G_\infty(\Delta_G)$ and $l^G_\infty,$ respectively.\\
We can prove that
\begin{equation*}
T:\left[sl^G_\infty(\Delta_G)\right]^*\rightarrow \left[l^G_\infty\right]^*,\,
f_{\Delta_G}\rightarrow f= f_{\Delta_G}o\Delta_G^{-1}
\end{equation*}
is a linear isometry. Thus $\left[sl^G_\infty(\Delta_G)\right]^*$ is equivalent to $\left[l^G_\infty\right]^*.$ In the same way we can show that $sc^{G}(\Delta_G)$ and $c^{G},$ $sc_0^{G}(\Delta_G)$ and $c_0^{G}$ are equivalent as topological spaces and \[\left[sc^{G}\Delta_G)\right]^*=\left[sc_0^{G}(\Delta_G)\right]^*=l_1^G \,(l_1^G, \,\text{the space of geometric absolutely convergent series}).\]
\section{Dual spaces of $l^G_\infty (\Delta_G)$}
\begin{lemma}\label{1}
The following conditions (a) and (b) are equivalent:
\begin{align*}
&(a) \sup_k|x_k\ominus x_{k+1}|^G<\infty ~~ i.e. ~~ \sup_k|\Delta_G x_k|^G<\infty;\\
&(b)(i) \sup_k e^{k^{-1}}\odot|x_k|^G<\infty \text{~and}\\
& \quad (ii)\sup_k|x_k\ominus e^{{k(k+1)}^{-1}}\odot x_{k+1}|^G<\infty. 
\end{align*}
\end{lemma}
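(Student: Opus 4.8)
The statement is the geometric analogue of the classical Kizmaz lemma, and I would prove the equivalence by showing (a)$\Rightarrow$(b) and (b)$\Rightarrow$(a) separately, using throughout the geometric triangle inequalities $\left|x\oplus y\right|^G\leq\left|x\right|^G\oplus\left|y\right|^G$ and $\left|x\ominus y\right|^G\leq\left|x\right|^G\oplus\left|y\right|^G$, the identity $\left|x\odot y\right|^G=\left|x\right|^G\odot\left|y\right|^G$, and the observation that $e^{t}\odot z=z^{\,t}$, so that applying $e^{k^{-1}}\odot(\,\cdot\,)$ is nothing but taking a geometric $k$-th root. Under the generator $\ln$ every geometric estimate below becomes an ordinary inequality between real sequences ($\left|e^{y}\right|^G=e^{\left|y\right|}$ turning geometric boundedness into ordinary boundedness of the logarithms), which is what makes the individual steps routine.

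For (a)$\Rightarrow$(b)(i) I would telescope. Since $x_k=x_1\ominus\Delta_Gx_1\ominus\Delta_Gx_2\ominus\cdots\ominus\Delta_Gx_{k-1}$, repeated use of the triangle inequality gives $\left|x_k\right|^G\leq\left|x_1\right|^G\oplus\left|\Delta_Gx_1\right|^G\oplus\cdots\oplus\left|\Delta_Gx_{k-1}\right|^G$. If (a) holds with geometric bound $M$, the right side is at most $\left|x_1\right|^G\oplus\left(e^{k-1}\odot M\right)$. Applying the geometric $k$-th root and using that $\odot$ distributes over $\oplus$, $e^{k^{-1}}\odot\left|x_k\right|^G\leq\left(e^{k^{-1}}\odot\left|x_1\right|^G\right)\oplus\left(e^{(k-1)k^{-1}}\odot M\right)$; here the first factor tends to the geometric zero $1$ and the second stays below $M$, so the supremum is finite and (b)(i) follows.

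The decisive ingredient is the splitting identity $x_k\ominus e^{k(k+1)^{-1}}\odot x_{k+1}=\Delta_Gx_k\oplus\left(e^{(k+1)^{-1}}\odot x_{k+1}\right)$, which I would check by a direct computation with $\oplus,\ominus,\odot$ (it is the geometric form of $x_k-\tfrac{k}{k+1}x_{k+1}=(x_k-x_{k+1})+\tfrac{1}{k+1}x_{k+1}$). Granting it, (a)$\Rightarrow$(b)(ii) is immediate: the triangle inequality bounds the left side by $\left|\Delta_Gx_k\right|^G\oplus\left(e^{(k+1)^{-1}}\odot\left|x_{k+1}\right|^G\right)$, whose first term is controlled by (a) and whose second term is controlled by the (b)(i) just established. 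Reading the same identity as $\Delta_Gx_k=\left(x_k\ominus e^{k(k+1)^{-1}}\odot x_{k+1}\right)\ominus\left(e^{(k+1)^{-1}}\odot x_{k+1}\right)$ and bounding the two pieces by (b)(ii) and (b)(i) respectively gives (b)$\Rightarrow$(a).

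I expect the only genuine obstacle to be the (a)$\Rightarrow$(b)(i) step: one must produce the linear-in-$k$ geometric growth bound $\left|x_k\right|^G\leq\left|x_1\right|^G\oplus\left(e^{k-1}\odot M\right)$ from the telescoping sum and then verify that the geometric root $e^{k^{-1}}\odot(\,\cdot\,)$ genuinely absorbs it, i.e. that $e^{k^{-1}}\odot\left(e^{k-1}\odot M\right)=e^{(k-1)k^{-1}}\odot M$ remains geometrically bounded as $k\to\infty$. Everything else is the splitting identity plus the two triangle inequalities, which reduce, via $x\mapsto\ln x$, to the familiar real-variable manipulations; the care needed is mainly in keeping the factors $e^{k^{-1}}$, $e^{k(k+1)^{-1}}$ and $e^{(k+1)^{-1}}$ straight.
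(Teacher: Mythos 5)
Your proof is correct, and its overall architecture --- telescoping for (b)(i), then an exact splitting identity for (b)(ii) and for the converse --- is the same as the paper's. The one genuine difference is the splitting identity itself. The paper uses (in logarithmic form) $\ln x_k-\tfrac{k}{k+1}\ln x_{k+1}=\tfrac{k}{k+1}(\ln x_k-\ln x_{k+1})+\tfrac{1}{k+1}\ln x_k$, i.e. $x_k\ominus e^{k(k+1)^{-1}}\odot x_{k+1}=\left\{e^{k(k+1)^{-1}}\odot(x_k\ominus x_{k+1})\right\}\oplus\left\{e^{(k+1)^{-1}}\odot x_k\right\}$, whereas you use $\ln x_k-\tfrac{k}{k+1}\ln x_{k+1}=(\ln x_k-\ln x_{k+1})+\tfrac{1}{k+1}\ln x_{k+1}$, putting weight one on the difference term and the $\tfrac{1}{k+1}$ remainder on $x_{k+1}$ rather than $x_k$. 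Both identities are valid, but yours buys something concrete in the direction (b)$\Rightarrow$(a): isolating $\Delta_Gx_k$ with coefficient $e$ yields $\sup_k|\Delta_Gx_k|^G<\infty$ immediately from the triangle inequality $|u\ominus v|^G\leq|u|^G\oplus|v|^G$, whereas the paper's weighted identity (run through the reverse triangle inequality) only bounds $e^{k(k+1)^{-1}}\odot|\Delta_Gx_k|^G=\left(|\Delta_Gx_k|^G\right)^{k/(k+1)}$, and one still has to absorb the exponent --- e.g. via $|\Delta_Gx_k|^G\leq M^{(k+1)/k}\leq M^2$ for a geometric bound $M\geq 1$ --- a step the paper passes over silently with ``Thus $\sup_k|x_k\ominus x_{k+1}|^G<\infty$ as $b(i)$ and $b(ii)$ hold.'' Your variant closes that small gap automatically. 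Otherwise the two arguments are step-for-step parallel: your telescoping bound $|x_k|^G\leq|x_1|^G\oplus(e^{k-1}\odot M)$ is the paper's $|x_k|^G=O(e^k)$ followed by the geometric $k$-th root, and your appeal to (b)(i) at index $k+1$ in place of the paper's index $k$ (legitimate since $(|x_k|^G)^{1/(k+1)}\leq(|x_k|^G)^{1/k}$ when $|x_k|^G\geq 1$) is immaterial.
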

\begin{proof}
Let (a) be true i.e. $\sup_k|x_k\ominus x_{k+1}|^G<\infty .$
\begin{align*}
\text{Now~} |x_1\ominus x_{k+1}|^G&=\left|{_G\sum^k_{v=1}}{(x_v \ominus x_{v+1})}\right|^G\\
 &=\left|{_G\sum^k_{v=1}}{\Delta_Gx_v}\right|^G\\
&\leq {_G\sum^k_{v=1}}\left|\Delta_Gx_v\right|^G=O(e^k)\\
	\text{and~} |x_k|^G  &=|x_1\ominus x_1\oplus x_{k+1}\oplus x_k\ominus x_{k+1}|^G\\
	 &\leq |x_1|^G\oplus |x_1\ominus x_{k+1}|^G\oplus |x_k\ominus x_{k+1}|^G=O(e^k).			
	 \end{align*}
This implies that $\sup_k e^{k^{-1}}\odot|x_k|^G<\infty.$ This completes the proof of $b(i).$\\
Again
\begin{align*}
\sup_k\left|x_k\ominus e^{{k(k+1)}^{-1}}\odot x_{k+1}\right|^G &=\left|\left\{e^{{(k+1)}}\odot e^{{(k+1)}^{-1}}\right\}\odot x_k\ominus e^{{k(k+1)}^{-1}}\odot x_{k+1} \right|^G \\
  &=\left|\left\{(e^k \oplus e)\odot e^{{(k+1)}^{-1}}\right\}\odot x_k\ominus e^{{k(k+1)}^{-1}}\odot x_{k+1} \right|^G \\
  &=\left|\left\{e^{k(k+1)^{-1}}\odot x_k\oplus e^{(k +1)^{-1}}\odot x_k \right\}\ominus e^{k(k+1)^{-1}}\odot x_{k+1}\right|^G\\
	&=\left|\left\{e^{k(k+1)^{-1}}\odot(x_k\ominus x_{k+1})\right\}\oplus \left\{e^{(k+1)^{-1}}\odot x_k\right\}\right|^G\\
	&\leq e^{k(k+1)^{-1}}\odot \left|x_k\ominus x_{k+1}\right|^G\oplus e^{(k+1)^{-1}}\odot \left|x_k\right|^G\\
			&=O(e).
\end{align*}
Therefore $\sup_k|x_k\ominus e^{{k(k+1)}^{-1}}\odot x_{k+1}|^G<\infty.$ This completes the proof of $b(ii).$

Conversely let $(b)$ be true. Then
\begin{align*}
\left|x_k\ominus e^{k(k+1)^{-1}}\odot x_{k+1}\right|^G&=\left|e^{(k+1)(k+1)^{-1}}\odot x_k\ominus e^{k(k+1)^{-1}}\odot x_{k+1}\right|^G\\
                                                         &\geq e^{k(k+1)^{-1}}\odot|x_k\ominus x_{k+1}|^G\ominus e^{(k+1)^{-1}}\odot |x_k|^G
\end{align*}
i.e. $e^{k(k+1)^{-1}}\odot|x_k\ominus x_{k+1}|^G\leq e^{(k+1)^{-1}}\odot |x_k|^G\oplus \left|x_k\ominus e^{k(k+1)^{-1}}\odot x_{k+1}\right|^G.$\\
Thus $\sup_k|x_k\ominus x_{k+1}|^G<\infty$ as $b(i)$ and $b(ii)$ hold.
\end{proof}
\textbf{Geometric form of Abel's partial summation formula:}
 Abel's partial summation formula states that if $(a_k)$ and $(b_k)$ are sequences, then 
\[\sum_{k=1}^n a_kb_k=\sum_{k=1}^nS_k(b_k-b_{k+1})+ S_nb_{n+1},\]
where $S_k=\sum_{i=1}^ka_i.$ Then
\begin{align*}
\sum_{k=1}^\infty a_kb_k &=\sum_{k=1}^\infty S_k(b_k-b_{k+1})+ \lim_{n\to \infty}S_nb_{n+1}\\
\sum_{k=1}^\infty a_kb_k &=\sum_{k=1}^\infty S_k(b_k-b_{k+1}), \text{~if $(b_k)$ ~ monotonically decreases to zero.}
\end{align*}
Similarly as $\odot$ is distributive over $\oplus$ we have
\[{_G\sum_{k=1}^\infty} a_k\odot b_k ={_G\sum_{k=1}^\infty} S_k\odot(b_k\ominus b_{k+1}),\text{~where~} \,S_k={_G\sum_{i=1}^k}a_i.\]
In particular, if $(b_k)=(e^{-k}),$ then $(b_k)$ monotonically decreases to zero. Then
\begin{align*}
_G\sum^\infty_{k=1}a_k\odot e^{-k} &= _G\sum^\infty_{k=1}S_k\odot \left(e^{-k}\ominus e^{-(k+1)}\right)\\
                                   &= _G\sum^\infty_{k=1}S_k\odot e= _G\sum^\infty_{k=1}S_k.
\end{align*}
 
Let $(p_n)$ be a sequence of geometric positive numbers monotonically increasing to infinity. Then $(\frac {e}{p_n}G)$ is a sequence monotonically decreasing to zero(i.e. to $1$).
\begin{lemma}\label{2}
\[\text{If} ~ \sup_n\left|{_G\sum^n_{v=1}}c_v\right|^G\leq \infty \text {~then~} \sup_n\left(p_n\odot\left|{_G\sum^\infty_{k=1}}\frac{c_{n+k-1}}{p_{n+k}}G\right|^G\right)<\infty.\]
\end{lemma}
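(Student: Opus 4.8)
The plan is to transcribe the classical Kizmaz-type estimate into geometric arithmetic, the two essential ingredients being the geometric Abel partial-summation formula established just above and the telescoping of a geometric series. First I would clean up the inner series by re-indexing. Putting $j=n+k-1$ turns ${_G\sum_{k=1}^\infty}\left(\frac{c_{n+k-1}}{p_{n+k}}G\right)$ into ${_G\sum_{j=n}^\infty}\left(c_j\oslash p_{j+1}\right)$. Since $c_j\oslash p_{j+1}=c_j^{1/\ln p_{j+1}}=c_j\odot b_j$ with $b_j:=\frac{e}{p_{j+1}}G=e^{1/\ln p_{j+1}}$, the summand is exactly of the product form $a_j\odot b_j$ required by the geometric Abel formula. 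Because $(p_n)$ increases geometrically to infinity, $(b_j)$ decreases monotonically to the geometric zero $1$, so the formula is applicable.

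Next I would set $C_m:={_G\sum_{v=1}^m}c_v$, so the hypothesis furnishes a finite $M$ with $\left|C_m\right|^G\le M$ for all $m$, and write the partial sums occurring in Abel's formula as $S_j:={_G\sum_{i=n}^j}c_i=C_j\ominus C_{n-1}$. The geometric triangle inequality then gives the bound $\left|S_j\right|^G\le\left|C_j\right|^G\oplus\left|C_{n-1}\right|^G\le M\oplus M=M^2$, uniform in both $j$ and $n$. Applying the geometric Abel formula with the starting index $n$ in place of $1$ yields ${_G\sum_{j=n}^\infty}c_j\odot b_j={_G\sum_{j=n}^\infty}S_j\odot(b_j\ominus b_{j+1})$, provided the boundary term $S_m\odot b_{m+1}$ tends to the geometric zero. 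This holds because $\ln S_m$ stays bounded (its modulus being at most $\ln M^2$) while $\ln b_{m+1}=1/\ln p_{m+2}\to 0$, whence $S_m\odot b_{m+1}=e^{\ln S_m\cdot\ln b_{m+1}}\to 1$.

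I would then estimate using $\left|x\odot y\right|^G=\left|x\right|^G\odot\left|y\right|^G$ and the multiplicative triangle inequality. Since $(b_j)$ is geometrically decreasing, $b_j\ominus b_{j+1}\ge 1$, so its geometric modulus may be dropped, giving
\[
\left|{_G\sum_{j=n}^\infty}S_j\odot(b_j\ominus b_{j+1})\right|^G\le M^2\odot{_G\sum_{j=n}^\infty}(b_j\ominus b_{j+1})=M^2\odot b_n,
\]
the last equality being the telescoping ${_G\sum_{j=n}^\infty}(b_j\ominus b_{j+1})={_G\lim_{m\to\infty}}(b_n\ominus b_{m+1})=b_n$, valid because $b_{m+1}\to 1$. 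Multiplying geometrically by $p_n$ and using associativity of $\odot$ gives $p_n\odot\left|{_G\sum_{j=n}^\infty}c_j\odot b_j\right|^G\le M^2\odot(p_n\odot b_n)$, and here $p_n\odot b_n=p_n^{1/\ln p_{n+1}}=e^{\ln p_n/\ln p_{n+1}}\le e$ because $(p_n)$ is increasing. Since $e$ is the geometric identity for $\odot$, the whole expression is bounded by $M^2$ uniformly in $n$, which is the assertion.

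The step I expect to cause the most trouble is the honest verification of the vanishing boundary term together with the geometric telescoping: one must keep careful track of which operation is being invoked, confirm that $b_j\ominus b_{j+1}$ is genuinely $\ge 1$ so that its geometric modulus may be discarded, and check the monotonicity of $\odot$ (legitimate here only because every factor involved is $\ge 1$ geometrically) that justifies replacing $p_n\odot b_n$ by its bound $e$. Everything else is a routine re-reading of the classical argument through the dictionary $\oplus\leftrightarrow\times$, $\ominus\leftrightarrow\div$, and $\odot\leftrightarrow$ exponentiation.
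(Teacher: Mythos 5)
Your proof is correct and takes essentially the same route as the paper: both apply the geometric Abel partial summation formula to the shifted partial sums ${_G\sum_{v=1}^{k}}c_{n+v-1}$ and the geometrically decreasing sequence $\left(\frac{e}{p_{n+k}}G\right)$, exactly as in the paper's equation (3.2). The only difference is one of completeness, not of method: the paper ends by merely asserting $p_n\odot\left|{_G\sum_{k=1}^\infty}\frac{c_{n+k-1}}{p_{n+k}}G\right|^G=O(e)$, whereas your uniform bound $\left|S_j\right|^G\le M^2$, the telescoping of ${_G\sum_{j=n}^\infty}(b_j\ominus b_{j+1})$ to $b_n$, and the estimate $p_n\odot b_n\le e$ supply exactly the justification that assertion leaves implicit.
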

\begin{proof}
Using this Abel's partial summation formula to $(c_v)$ and $\left(\frac{e}{p_n}G\right)$ we get
\begin{equation}\label{Eqn2}
{_G\sum_{k=1}^\infty} \frac{c_{n+k-1}}{p_{n+k}}G= {_G\sum_{k=1}^\infty}\left({_G\sum_{v=1}^k} c_{n+v-1}\right)\odot \left(\frac{e}{p_{n+k}}G\ominus \frac{e}{p_{n+k+1}}G\right)
\end{equation}
\[\text{and}\quad p_n \odot\left|{_G\sum_{k=1}^\infty}\frac{c_{n+k-1}}{p_{n+k}}G\right|^G= O(e).\]
\end{proof}
\begin{lemma}\label{3} If the series $\sum_{k=1}^\infty c_k$ is convergent then 
\[ \lim_n \left( p_n \odot {_G\sum_{k=1}^\infty}\frac{c_{n+k-1}}{p_{n+k}}G\right)=1.\]
\end{lemma}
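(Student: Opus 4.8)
The plan is to reuse the geometric Abel summation identity \eqref{Eqn2} from the proof of Lemma \ref{2} and to upgrade its conclusion from boundedness to an actual vanishing limit, the extra ingredient being the full strength of convergence. Writing $S_k={_G\sum_{v=1}^k} c_{n+v-1}$ for the geometric partial sums of the shifted tail (these still depend on $n$), I would multiply \eqref{Eqn2} through by $p_n$ and use that $\odot$ is associative and distributes over $_G\sum$ to record
\[
p_n\odot{_G\sum_{k=1}^\infty}\frac{c_{n+k-1}}{p_{n+k}}G={_G\sum_{k=1}^\infty}\left(p_n\odot S_k\right)\odot\left(\frac{e}{p_{n+k}}G\ominus\frac{e}{p_{n+k+1}}G\right),
\]
so that the problem reduces to showing that the right-hand geometric series tends to the geometric zero $1$.

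The decisive step is to convert convergence of the series into a uniform smallness of the blocks $S_k$. Since $S_k$ is precisely the geometric tail block $c_n\oplus c_{n+1}\oplus\cdots\oplus c_{n+k-1}$, the geometric Cauchy criterion gives, for every $\varepsilon>1$, an index $N$ such that for all $n\geq N$ and \emph{every} $k\geq 1$,
\[
|S_k|^G=\left|{_G\sum_{v=1}^k} c_{n+v-1}\right|^G<\varepsilon.
\]
This uniformity in $k$ is the genuine gain over Lemma \ref{2}, where the partial sums were only assumed bounded.

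With this estimate in hand I would bound the right-hand series by the geometric triangle inequality, termwise, using that each increment $\frac{e}{p_{n+k}}G\ominus\frac{e}{p_{n+k+1}}G$ exceeds $1$ (as $(\frac{e}{p_n}G)$ decreases monotonically to $1$) and hence equals its own geometric absolute value. Replacing $|S_k|^G$ by its bound $\varepsilon$, pulling $p_n\odot\varepsilon$ out of the sum by distributivity, and telescoping the surviving increments to $\frac{e}{p_{n+1}}G$ (since $p_{n+k}\to\infty$ geometrically), I would obtain for all $n\geq N$
\[
\left|p_n\odot{_G\sum_{k=1}^\infty}\frac{c_{n+k-1}}{p_{n+k}}G\right|^G\leq\varepsilon\odot\left(p_n\odot\frac{e}{p_{n+1}}G\right)\leq\varepsilon\odot e=\varepsilon,
\]
where the last inequality holds because $(p_n)$ increasing forces $p_n\odot\frac{e}{p_{n+1}}G\leq e$. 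As $\varepsilon>1$ is arbitrary, this gives the claimed limit $1$.

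The main obstacle I anticipate is the rigorous treatment of the infinite geometric series: justifying the termwise triangle inequality and the distributive factoring of $p_n\odot\varepsilon$ across an infinite $_G\sum$, which really amounts to a dominated-convergence argument over the telescoping majorant ${_G\sum_k}(\frac{e}{p_{n+k}}G\ominus\frac{e}{p_{n+k+1}}G)=\frac{e}{p_{n+1}}G$. Once the uniform bound $|S_k|^G<\varepsilon$ is secured, the telescoping and the monotonicity estimate $p_n\odot\frac{e}{p_{n+1}}G\leq e$ are routine, so the real content is concentrated in the Cauchy-criterion step and these interchange justifications.
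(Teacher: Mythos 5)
Your proposal is correct and runs along the same route as the paper's own proof: both hinge on the Abel identity \eqref{Eqn2} from Lemma \ref{2}, with the convergence hypothesis entering through the block sums ${_G\sum_{v=1}^k} c_{n+v-1}={_G\sum_{v=n}^{n+k-1}} c_v$. The difference is one of completeness, and it is in your favor. The paper's proof merely remarks that these blocks are $O(e)$ and concludes $p_n\odot\left|{_G\sum_{k=1}^\infty}\frac{c_{n+k-1}}{p_{n+k}}G\right|^G=O(e)$; as literally written, that only reproduces the boundedness conclusion of Lemma \ref{2} and never actually establishes the stated limit $1$. Your proof supplies exactly the two ingredients that turn boundedness into vanishing: the Cauchy-criterion step giving $\left|{_G\sum_{v=n}^{n+k-1}}c_v\right|^G<\varepsilon$ \emph{uniformly in} $k$ once $n\geq N$, and the telescoping estimate ${_G\sum_{k=1}^\infty}\left(\frac{e}{p_{n+k}}G\ominus\frac{e}{p_{n+k+1}}G\right)=\frac{e}{p_{n+1}}G$ combined with $p_n\odot\frac{e}{p_{n+1}}G\leq e$ (valid since $(p_n)$ increases, as $\ln p_n/\ln p_{n+1}\leq 1$). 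Your closing caveat about justifying the termwise triangle inequality and factoring across the infinite $_G\sum$ is well placed, and the dominated-convergence argument over the telescoping majorant you describe settles it; this is glossed over in the paper as well. In short: same decomposition, but your write-up is the proof the paper should have given.
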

\begin{proof}
Since $$ \left|{_G\sum_{v=1}^k} c_{n+v-1}\right|^G = \left| {_G\sum_{v=n}^{n+k-1}} c_v\right|^G=O(e)$$ for every $k\in \mathbb{N}.$ Using (\ref{Eqn2}) we get
\[p_n\odot \left|{_G\sum_{k=1}^\infty} \frac{c_{n+k-1}}{p_{n+k}}G\right|^G=O(e).\]
\end{proof}
\begin{corollary}\label{Cor1} Let $(p_n)$ be monotonically increasing. If
$$\sup_n\left|{_G\sum_{v=1}^n} p_v\odot a_v\right|^G<\infty \text{~then~} \sup_n\left|p_n\odot {_G\sum_{k=n+1}^\infty} a_k\right|^G<\infty.$$
\end{corollary}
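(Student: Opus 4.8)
The plan is to read this off from Lemma \ref{2} after a single index shift, avoiding a fresh summation by parts. The key observation is that the term $c_{n+k-1}\oslash p_{n+k}$ occurring in the conclusion of Lemma \ref{2} collapses to the bare coordinate $a_{n+k}$ precisely when $c_{n+k-1}$ carries the weight $p_{n+k}$. I would therefore set
\[
c_v:=p_{v+1}\odot a_{v+1}\qquad(v\in\mathbb{N}),
\]
so that $c_{n+k-1}=p_{n+k}\odot a_{n+k}$ for every $k\ge1$, and the cancellation rule $(p_{n+k}\odot a_{n+k})\oslash p_{n+k}=a_{n+k}$ (valid since $p_{n+k}\neq1$) gives
\[
{_G\sum_{k=1}^\infty}\,c_{n+k-1}\oslash p_{n+k}={_G\sum_{k=1}^\infty}a_{n+k}={_G\sum_{k=n+1}^\infty}a_k.
\]

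Next I would verify that this $(c_v)$ meets the hypothesis of Lemma \ref{2}. Since $\oplus$ acts coordinatewise, shifting the index yields
\[
{_G\sum_{v=1}^n}c_v={_G\sum_{v=2}^{n+1}}p_v\odot a_v=\left({_G\sum_{v=1}^{n+1}}p_v\odot a_v\right)\ominus\left(p_1\odot a_1\right),
\]
so the partial sums of $(c_v)$ differ from those assumed bounded in the statement only by the fixed element $p_1\odot a_1$. The geometric triangle inequality $|x\ominus y|^G\le|x|^G\oplus|y|^G$ then gives
\[
\sup_n\left|{_G\sum_{v=1}^n}c_v\right|^G\le\left(\sup_n\left|{_G\sum_{v=1}^{n+1}}p_v\odot a_v\right|^G\right)\oplus\left|p_1\odot a_1\right|^G<\infty,
\]
so Lemma \ref{2} applies. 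Its conclusion, combined with the identity displayed above, reads $\sup_n\big(p_n\odot|{_G\sum_{k=n+1}^\infty}a_k|^G\big)<\infty$; and since each $p_n$ is geometric positive, $|p_n|^G=p_n$ and $|\,\cdot\,|^G$ is multiplicative over $\odot$, so $p_n\odot|{_G\sum_{k=n+1}^\infty}a_k|^G=|p_n\odot{_G\sum_{k=n+1}^\infty}a_k|^G$, which is exactly the claim.

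The step I expect to be the real crux is this hypothesis transfer: one must be sure that boundedness of the weighted partial sums ${_G\sum_{v=1}^n}p_v\odot a_v$ survives the index shift to $(c_v)$, and this is where the geometric triangle inequality and the fact that a single dropped term contributes only a constant geometric factor do the work; the remaining manipulations are pure bookkeeping with the cancellation and multiplicativity rules of Section~1. Should a self-contained argument be preferred, the same bound follows directly by writing $a_k=c_k\odot\frac{e}{p_k}G$ with $c_k=p_k\odot a_k$ and $S_k={_G\sum_{i=1}^k}c_i$, and feeding the tail ${_G\sum_{k=n+1}^\infty}a_k$ into the geometric Abel partial summation formula: this produces a boundary term $\ominus\left(S_n\odot\frac{e}{p_{n+1}}G\right)$ and a telescoping series ${_G\sum_{k=n+1}^\infty}S_k\odot\left(\frac{e}{p_k}G\ominus\frac{e}{p_{k+1}}G\right)$, and bounding $|S_k|^G$ by the hypothesis while using the monotonicity estimate $p_n\odot\frac{e}{p_{n+1}}G\le e$ yields the same $O(e)$ bound seen in the proofs of Lemmas \ref{2} and \ref{3}.
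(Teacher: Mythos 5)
Your proof is correct and follows essentially the same route as the paper: both substitute $c_v = p_{v+1}\odot a_{v+1}$ into Lemma \ref{2} and read off the conclusion via the cancellation $\frac{p_{n+k}\odot a_{n+k}}{p_{n+k}}G = a_{n+k}$. You are in fact more careful than the paper, which never checks that the shifted partial sums ${_G\sum_{v=1}^n} c_v$ satisfy the hypothesis of Lemma \ref{2}; your triangle-inequality step accounting for the dropped term $p_1\odot a_1$ fills that small gap.
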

\begin{proof}
We put $p_{k+1}\odot a_{k+1}$ instead of $c_k$ in Lemma \ref{2} we get
\begin{align*}
p_n\odot {_G\sum_{k=1}^\infty} \frac{c_{n+k-1}}{p_{n+k}}G &=p_n\odot {_G\sum_{k=1}^\infty} \frac{p_{n+k}\odot a_{n+k}}{p_{n+k}}G\\
  &=p_n\odot {_G\sum_{k=1}^\infty} a_{n+k}\\
																												&=p_n\odot {_G\sum_{k=n+1}^\infty} a_k =O(e).
\end{align*}
\end{proof}
\begin{corollary}\label{Cor2}
$$\text{If}~ {_G\sum_{k=1}^\infty} p_k\odot a_k \text{~is convergent~ then~} \lim_n p_n\odot {_G\sum_{k=n+1}^\infty} a_k=1.$$
\end{corollary}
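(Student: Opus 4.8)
The plan is to derive Corollary \ref{Cor2} from Lemma \ref{3} by the very substitution that turned Lemma \ref{2} into Corollary \ref{Cor1}. Indeed, Corollary \ref{Cor1} is to Lemma \ref{2} as Corollary \ref{Cor2} should be to Lemma \ref{3}: the first pair records geometric boundedness of the tail-weighted sum, while the second pair upgrades this to convergence to the geometric zero $1$. So I would simply rerun the earlier argument with the convergent lemma in place of the bounded one.

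First I would set $c_k = p_{k+1}\odot a_{k+1}$, exactly as in the proof of Corollary \ref{Cor1}. Then $c_{n+k-1}=p_{n+k}\odot a_{n+k}$, and the geometric division cancels the weight,
\[
\frac{c_{n+k-1}}{p_{n+k}}G=\frac{p_{n+k}\odot a_{n+k}}{p_{n+k}}G=a_{n+k},
\]
the last equality being immediate once the expression is transported through the generator $\ln$, under which $\oplus,\ominus,\odot,\oslash$ become $+,-,\times,\div$. Shifting the summation index by $j=n+k$ then gives
\[
p_n\odot{_G\sum_{k=1}^\infty}\frac{c_{n+k-1}}{p_{n+k}}G=p_n\odot{_G\sum_{k=1}^\infty}a_{n+k}=p_n\odot{_G\sum_{k=n+1}^\infty}a_k,
\]
so the quantity appearing in Lemma \ref{3} is, for this choice of $c_k$, precisely the quantity whose limit the corollary asks us to compute.

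Next I would check that the hypothesis of the corollary feeds the hypothesis of Lemma \ref{3}. Since ${_G\sum_{k=1}^\infty}p_k\odot a_k$ is convergent, discarding its first term leaves ${_G\sum_{k=1}^\infty}c_k={_G\sum_{k=2}^\infty}p_k\odot a_k$ convergent as well. Lemma \ref{3} then applies verbatim and yields $\lim_n\left(p_n\odot{_G\sum_{k=1}^\infty}\frac{c_{n+k-1}}{p_{n+k}}G\right)=1$. Substituting the identity of the previous paragraph gives $\lim_n p_n\odot{_G\sum_{k=n+1}^\infty}a_k=1$, which is the assertion.

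I do not anticipate a real obstacle: the statement is a faithful geometric transcription of the classical fact that tail-weighting sends a convergent weighted series to a null sequence, and the engine, Lemma \ref{3}, is already available. The only place demanding attention is the geometric bookkeeping---verifying that $\frac{p_{n+k}\odot a_{n+k}}{p_{n+k}}G$ collapses to $a_{n+k}$ and that the shift $k\mapsto n+k$ is carried correctly under ${_G\sum}$---but both are routine through the isomorphism $\ln$. All use of the standing assumption that $(p_n)$ increases to infinity is already absorbed into Lemma \ref{3}, so nothing beyond this substitution and re-indexing remains.
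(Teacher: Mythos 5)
Your proposal is correct and is exactly the paper's proof: the authors' entire argument for Corollary \ref{Cor2} is ``put $p_{k+1}\odot a_{k+1}$ in place of $c_k$ in Lemma \ref{3},'' which is precisely your substitution. You have merely filled in the routine details (the cancellation $\frac{p_{n+k}\odot a_{n+k}}{p_{n+k}}G=a_{n+k}$, the index shift, and the verification that convergence of ${_G\sum}p_k\odot a_k$ gives convergence of ${_G\sum}c_k$) that the paper leaves implicit.
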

\begin{proof}
We put $p_{k+1}\odot a_{k+1}$ instead of $c_k$ in Lemma \ref{3}.
\end{proof}
\begin{corollary}\label{Cor3}
 $${_G\sum_{k=1}^\infty} e^k\odot a_k \text{~is convergent iff~}{_G\sum_{k=1}^\infty} R_k\text{~is convergent with~} e^n\odot R_n = O(e),\text{~where~}$$
 $$R_n = {_G\sum_{k=n+1}^\infty} a_k.$$
\end{corollary}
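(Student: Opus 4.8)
The plan is to reduce everything to a single geometric summation-by-parts identity and then invoke Corollaries \ref{Cor1} and \ref{Cor2} with the choice $p_n=e^n$, which is a sequence of geometric positive numbers monotonically increasing to infinity, so that both corollaries apply and $p_n\odot a_n=e^n\odot a_n$. Writing $a_k=R_{k-1}\ominus R_k$ (where $R_0={_G\sum_{k=1}^\infty}a_k$), I would first establish the finite identity
\[ {_G\sum_{k=1}^n}e^k\odot a_k=\left({_G\sum_{j=0}^{n-1}}R_j\right)\ominus\left(e^n\odot R_n\right), \]
which is the image under the generator $\exp$ of the classical partial-summation identity $\sum_{k=1}^n k a_k=\sum_{j=0}^{n-1}R_j-nR_n$ and can equally be read off from the geometric Abel partial summation formula stated before Lemma \ref{2}. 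This identity is the engine of both implications.

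For the forward implication, assume ${_G\sum_{k=1}^\infty}e^k\odot a_k$ converges. First I would verify that $R_n$ is meaningful, i.e. that ${_G\sum}a_k$ converges: since $a_k=\left(\frac{e}{p_k}G\right)\odot\left(e^k\odot a_k\right)$ with $p_k=e^k$ and $\left(\frac{e}{p_k}G\right)$ monotonically decreasing to the geometric zero $1$, while the partial sums of ${_G\sum}e^k\odot a_k$ are bounded, the geometric Abel (Dirichlet) test gives convergence of ${_G\sum}a_k$. Corollary \ref{Cor2} with $p_n=e^n$ then yields $\lim_n e^n\odot R_n=1$, hence in particular $e^n\odot R_n=O(e)$. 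Rewriting the identity as
\[ {_G\sum_{j=0}^{n-1}}R_j=\left({_G\sum_{k=1}^n}e^k\odot a_k\right)\oplus\left(e^n\odot R_n\right) \]
and letting $n\to\infty$, the right-hand side tends to a finite geometric limit, so ${_G\sum}R_k$ converges and the two asserted conditions hold.

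For the converse, assume ${_G\sum}R_k$ converges and $e^n\odot R_n=O(e)$. From the identity, ${_G\sum_{k=1}^n}e^k\odot a_k=\left({_G\sum_{j=0}^{n-1}}R_j\right)\ominus\left(e^n\odot R_n\right)$, where the first summand already converges by hypothesis, so the whole question reduces to the behaviour of the boundary term $e^n\odot R_n$. The main obstacle is exactly here: the boundedness $e^n\odot R_n=O(e)$ does not by itself force $e^n\odot R_n$ to possess a limit, so a Tauberian-type step is required to upgrade $O(e)$ to $\lim_n e^n\odot R_n=1$, using the convergence of ${_G\sum}R_k$ (equivalently $R_n\to 1$ together with summability of the tails). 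Once this upgrade is in hand, passing to the limit in the identity gives convergence of ${_G\sum}e^k\odot a_k$ and closes the equivalence. I expect this passage from mere boundedness to the vanishing limit to be the crux, since the corresponding classical assertion fails under $nR_n=O(1)$ alone and genuinely needs the extra control carried by the convergence of ${_G\sum}R_k$.
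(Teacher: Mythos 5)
Your summation-by-parts identity and your forward direction are essentially the paper's own argument, done more carefully: the paper expands ${_G\sum_{k=1}^n}e^k\odot a_{k+1}$ and regroups it into ${_G\sum_{k=1}^n}R_k\ominus\{e^n\odot R_{n+1}\}$, which is your identity up to an index shift, and your invocation of Corollary \ref{Cor2} with $p_n=e^n$ to get $\lim_n e^n\odot R_n=1$ is exactly what is needed to make the forward implication complete (the paper never supplies this step).

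The genuine gap is in your converse, and it cannot be repaired along the route you sketch: the Tauberian upgrade from $e^n\odot R_n=O(e)$ to $\lim_n e^n\odot R_n=1$ is false even in the presence of convergence of ${_G\sum}R_k$, so the converse of the corollary as stated is simply not true. Transport everything through the generator $\exp$: with $\rho_n=\ln R_n$ and $\alpha_k=\ln a_k$, the claim becomes ``$\sum k\alpha_k$ converges iff $\sum\rho_k$ converges and $n\rho_n=O(1)$''. Take $\rho_n=1/n$ when $n\geq 1$ is a power of $2$, $\rho_n=0$ for all other $n\geq 1$, and define $\alpha_k=\rho_{k-1}-\rho_k$ (with $\rho_0:=1$, say); then $\sum_n\rho_n\leq\sum_j 2^{-j}<\infty$ and $n\rho_n\in\{0,1\}$ is bounded, yet $n\rho_n$ has no limit, so by the identity $\sum_{k=1}^n k\alpha_k=\sum_{j=0}^{n-1}\rho_j-n\rho_n$ the partial sums of $\sum k\alpha_k$ oscillate with amplitude $1$ and the series diverges. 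Geometrically: $R_n=e^{1/n}$ for $n$ a power of $2$ and $R_n=1$ otherwise yields ${_G\sum}R_k$ convergent and $e^n\odot R_n=O(e)$, while ${_G\sum}e^k\odot a_k$ diverges. So no argument can close your second implication; the hypothesis must be strengthened to $\lim_n e^n\odot R_n=1$, under which the converse follows at once from the identity. You should be aware that the paper's own proof commits precisely the error you were trying to avoid --- it deduces the ``iff'' from the identity together with mere boundedness of the boundary term --- so what your analysis has really uncovered is a defect in the corollary itself, not a deficiency peculiar to your attempt.
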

\begin{proof}
Let $p_n=e^n.$ Then it is monotonically increasing to infinity. Then
\begin{align*}
_G\sum_{k=1}^n e^k\odot a_{k+1} &= e\odot a_2 \oplus e^2 \odot a_3 \oplus e^3 \odot a_4 \oplus.....\oplus e^n\odot a_{n+1}\\
                                &= (a_2\oplus a_3 \oplus ....\oplus a_{n+1})\oplus (a_3\oplus a_4\oplus ...\oplus a_{n+1})\\
																&\qquad \oplus ..........\oplus (a_n\oplus a_{n+1})\oplus (a_{n+1})\\
																&= (R_1 \ominus R_{n+1})\oplus (R_2 \ominus R_{n+1})\oplus......\oplus (R_{n-1} \ominus R_{n+1})\oplus (R_n \ominus R_{n+1}) \\
																&= _G\sum_{k=1}^n R_k \ominus \{e^n\odot R_{n+1}\}.
\end{align*}
Therefore as $e^n\odot R_n=O(e)$, so $e^n\odot R_{n+1}=O(e).$ This implies \[{_G\sum_{k=1}^n} e^k\odot a_{k+1}\text{~is convergent if}\, _G\sum_{k=1}^nR_k\text{~is convergent and vice versa.}\]
\end{proof}
\section{$\alpha-,\beta-,$  $\gamma-$ duals}
\begin{defn} \cite{Garling67, KotheToplitz69, KotheToplitz34, Maddox80} If $X$ is a sequence space, we define
\begin{enumerate}
\item[(i)] $X^\alpha=\{a=(a_k) : \sum_{k=1}^\infty |a_k x_k|<\infty, \, \text{for each} \,x\in X\};$
\item[(ii)] $X^\beta=\{a=(a_k) : \sum_{k=1}^\infty a_k x_k \, \text{is convergent, for each} \,x\in X\};$
\item[(iii)] $ X^\gamma=\{a=(a_k) : \sup_n|\sum_{k=1}^n a_k x_k|<\infty, \, \text{for each} \,x\in X\}.$
\end{enumerate}
\end{defn}
$X^\alpha, X^\beta,$ and $X^\gamma$ are called $\alpha-$ (or K\"{o}the-Toeplitz), $\beta-$(or generalised K\"{o}the-Toeplitz), and $\gamma-$dual spaces of $X$. We can show that $X^\alpha \subset X^\beta \subset X^\gamma.$ If $X\subset Y,$ then $Y^\dag\subset X^\dag, $ for $\dag=\alpha, \beta $ or $\gamma.$
\begin{thm}
\[(i)~\text{If~} D_1= \left\{a=(a_k): {_G\sum_{k=1}^\infty} e^k\odot |a_k|^G<\infty\right\} \text{~then~} \left(sl_\infty^G(\Delta_G)\right)^\alpha=D_1.\]
\[(ii)~\text{If~} D_2= \left\{a=(a_k): {_G\sum_{k=1}^\infty} e^k\odot a_k \text{~is convergent with~} {_G\sum_{k=1}^\infty}|R_k|^G<\infty\right\}.\]
Then $\left(sl_\infty^G(\Delta_G)\right)^\beta=D_2.$
\[(iii)~\text{If~} D_3= \left\{a=(a_k): \sup_n|{_G\sum_{k=1}^n} e^k\odot a_k|^G<\infty, {_G\sum_{k=1}^\infty}|R_k|^G<\infty\right\}.\]
Then $\left(sl_\infty^G(\Delta_G)\right)^\gamma=D_3.$
\end{thm}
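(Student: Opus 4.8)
The three dualities here must be read in the geometric sense: $a\in X^{\alpha}$ means ${_G\sum_{k=1}^{\infty}}\left|a_k\odot x_k\right|^G<\infty$ for every $x\in X$, and likewise $X^{\beta}$ asks for geometric convergence of ${_G\sum}a_k\odot x_k$ and $X^{\gamma}$ for geometric boundedness of its partial sums. The plan is to reduce every assertion to two ingredients already in hand: the growth estimate $\left|x_k\right|^G=O(e^k)$ for $x\in l_\infty^G(\Delta_G)$ established in the proof of Lemma~\ref{1}, and the geometric Abel summation of Lemmas~\ref{2},~\ref{3} and Corollaries~\ref{Cor1}--\ref{Cor3}. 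The unifying device is that every $x\in sl_\infty^G(\Delta_G)$ is reconstructed from its differences $y_v=\Delta_G x_v\in l_\infty^G$ by geometric telescoping, since $x_1=1$ forces $x_n=\ominus\,{_G\sum_{v=1}^{n-1}}y_v$; conceptually this is Kizmaz's computation of the duals of $s\ell_\infty(\Delta)$ transported through the generator $\exp$, but I will argue directly in the geometric arithmetic.

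For part (i) I would prove the two inclusions separately. For $D_1\subseteq\left(sl_\infty^G(\Delta_G)\right)^{\alpha}$, use $\left|a_k\odot x_k\right|^G=\left|a_k\right|^G\odot\left|x_k\right|^G$ together with $\left|x_k\right|^G=O(e^k)$ to bound ${_G\sum}\left|a_k\odot x_k\right|^G$ by a geometric constant times ${_G\sum}e^k\odot\left|a_k\right|^G<\infty$. For the reverse inclusion I would feed the dual the explicit test sequences $x=(1,e,e^2,e^3,\dots)$, which lies in $sl_\infty^G(\Delta_G)$ because $x_1=1$ and $\Delta_G x=e^{-1}$ is geometrically bounded, and $x=(1,e,e,e,\dots)$; the first forces ${_G\sum}e^{k-1}\odot\left|a_k\right|^G<\infty$ and the second forces ${_G\sum}\left|a_k\right|^G<\infty$, and combining them yields ${_G\sum}e^k\odot\left|a_k\right|^G<\infty$, i.e. $a\in D_1$.

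Parts (ii) and (iii) rest on a single computational identity. Applying geometric Abel summation to $a_k$ and $x_k$ and interchanging the resulting double geometric sum gives
\[{_G\sum_{k=1}^{n}}a_k\odot x_k=\ominus\left({_G\sum_{v=1}^{n-1}}\left({_G\sum_{k=v+1}^{n}}a_k\right)\odot y_v\right),\]
whose tail behaviour is exactly the boundary term $e^n\odot R_{n+1}$ analysed in Corollary~\ref{Cor3}. For sufficiency, if $a\in D_2$ then convergence of ${_G\sum}e^k\odot a_k$ makes the boundary term geometrically vanish (Corollary~\ref{Cor3}), the inner sums tend to $R_v$, and $(R_v)\in l_1^G$ paired with the geometrically bounded $(y_v)$ makes ${_G\sum}R_v\odot y_v$ converge; hence ${_G\sum}a_k\odot x_k$ converges for every $x$ and $a\in\left(sl_\infty^G(\Delta_G)\right)^{\beta}$. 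For necessity I would again test with $x=(1,e,e^2,\dots)$ and $x=(1,e,e,\dots)$ to force ${_G\sum}e^k\odot a_k$ to converge (via Corollary~\ref{Cor3}), after which the displayed identity reduces the $\beta$-condition to the demand that ${_G\sum}R_v\odot y_v$ converge for every geometrically bounded $(y_v)$; taking $y_v$ to be the geometric sign of $R_v$ then forces ${_G\sum}\left|R_v\right|^G<\infty$. Part (iii) is the verbatim argument with ``geometrically convergent'' replaced everywhere by ``geometrically bounded partial sums'', invoking the $\sup$-versions in Lemma~\ref{2} and Corollary~\ref{Cor1}.

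The main obstacle is the necessity of ${_G\sum}\left|R_k\right|^G<\infty$ in (ii) and (iii). Two points need real care. First, the interchange of the double geometric sum and the passage $n\to\infty$ must be legitimised, and this is precisely where boundedness (respectively vanishing) of the boundary term $e^n\odot R_{n+1}$ from Corollary~\ref{Cor3} is essential rather than cosmetic, since without it the finite identity above does not converge to $\ominus\,{_G\sum}R_v\odot y_v$. Second, the ``geometric sign'' sequence $y_v$ must be checked to be admissible, i.e. the $x$ produced from it by geometric telescoping must genuinely lie in $sl_\infty^G(\Delta_G)$. A preliminary point I would settle explicitly at the outset is the reconciliation of the dual's definition (stated in ordinary arithmetic) with the geometric sums in $D_1,D_2,D_3$, so that the isomorphism through $\exp$ certifies that each manipulation above has a valid geometric counterpart.
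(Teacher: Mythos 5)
Your proposal is correct and takes essentially the same route as the paper's own proof: the growth estimate from Lemma \ref{1} plus an admissible test sequence for the $\alpha$-dual, and for the $\beta$- and $\gamma$-duals the telescoping reconstruction $x_k=\ominus\,{_G\sum_{v=1}^{k-1}}y_v$ with $y=\Delta_G x\in l_\infty^G$, whose double-sum identity is exactly the paper's equation (\ref{Eqn3}) once one writes ${_G\sum_{k=v+1}^{n}}a_k=R_v\ominus R_n$, handled with the same boundary-term corollaries. The only differences are cosmetic: you use two test sequences where the paper uses the single sequence $(1,e^2,e^3,\dots)$, and you spell out the geometric sign-sequence argument forcing ${_G\sum}|R_k|^G<\infty$, a detail the paper leaves implicit.
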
 
\begin{proof}$(i)$ Let $a\in D_1.$ Then for each $x\in sl_\infty^G(\Delta_G)$ we have
\[{_G\sum_{k=1}^\infty} |a_k\odot x_k|^G=_G\sum_{k=1}^\infty \left(e^k\odot |a_k|^G\right)\odot \left(e^{k^{-1}}\odot |x_k|^G\right)<\infty \quad \text{by using Lemma \ref{1}}.\]
This implies that $a\in \left(sl_\infty^G(\Delta_G)\right)^\alpha.$ Therefore
\begin{equation}\label{8}
 D_1\subseteq \left(sl_\infty^G(\Delta_G)\right)^\alpha.
\end{equation}
Again let $a\in \left(sl_\infty^G(\Delta_G)\right)^\alpha.$ Then ${_G\sum_{k=1}^\infty}|a_k\odot x_k|^G<\infty$ (by definition of $\alpha$-dual) for each $x\in sl_\infty^G(\Delta_G).$ So we take 
\begin{equation*}
x_k=
\begin{cases}
1, &\text{if $k=1;$}\\
e^k,&\text{if $k\geq 2,$}
\end{cases}
\end{equation*}
then $x=(1, e^2, e^3,.....)\in sl_\infty^G(\Delta_G).$ Therefore 
\begin{align*}
{_G\sum_{k=1}^\infty} e^k\odot|a_k|^G &=|a_1|^G \oplus {_G\sum_{k=2}^\infty} e^k\odot|a_k|^G\\
 &= |a_1|^G \oplus {_G\sum_{k=1}^\infty} |a_k\odot x_k|^G<\infty~ \text{as}~ a_1\odot x_1=1.
\end{align*}
Therefore $a\in D_1.$ This implies that
\begin{equation}\label{9}
\left(sl_\infty^G(\Delta_G)\right)^\alpha \subseteq D_1.
\end{equation}
Therefore from (\ref{8}) and (\ref{9}) we get
\[\left(sl_\infty^G(\Delta_G)\right)^\alpha = D_1.\]
$(ii)$ Let $a\in D_2.$ If $x\in sl_\infty^G(\Delta_G)$ then there exists one and only one $y=(y_k)\in l_\infty^G$ such that (see \ref{eqna})
\begin{align*}
    x_k &= \ominus {_G\sum_{v=1}^k} y_{v-1},  \,y_0=1\\
	 \text{Therefore}\quad x_1 &= \ominus {_G\sum_{v=1}^1} y_{v-1}	 =\ominus y_o=1\\
	  x_2 &= \ominus {_G\sum_{v=1}^2} y_{v-1}	 =\ominus y_1\\
		x_3 &= \ominus {_G\sum_{v=1}^3} y_{v-1}	 =\ominus y_1\ominus y_2\\
		x_4 &= \ominus {_G\sum_{v=1}^4} y_{v-1}	 =\ominus y_1\ominus y_2\ominus y_3\\
		...&.................................................\\
		...&................................................. \quad .\\
		  \text{Then~} {_G\sum_{k=1}^n} a_k\odot x_k &=a_1\odot x_1\oplus a_2\odot x_2\oplus a_3\odot x_3\oplus ......\oplus a_n\odot x_n\\
			 &=a_1 \odot 1\ominus a_2\odot y_1\ominus a_3\odot(y_1\oplus y_2)\ominus a_4\odot (y_1\oplus y_2\oplus y3)\ominus\\
			 &\qquad .....\ominus a_n\odot(y_1\oplus y_2\oplus .....\oplus y_{n-1})\\
			 &= \ominus(a_2\oplus a_3\oplus....\oplus a_n)\odot y_1\\
			 &\qquad \qquad \ominus (a_3\oplus a_4\oplus.... \oplus a_n)\odot y_2\ominus ....... \ominus a_n\odot y_{n-1}\\
			 &=(\ominus R_1 \odot y_1\oplus R_n\odot y_1)\oplus (\ominus R_2 \odot y_2\oplus R_n\odot y_2)\oplus ....\\
			 & \qquad \qquad.........\oplus (\ominus R_{n-1} \odot y_{n-1}\oplus R_n\odot y_{n-1})\\
			 &= \ominus _G\sum_{k=1}^{n-1}R_k\odot y_k \oplus R_n\odot _G\sum_{k=1}^{n-1}y_k.
\end{align*}
\begin{equation}\label{Eqn3}
 {_G\sum_{k=1}^n} a_k\odot x_k =\ominus {_G\sum_{k=1}^{n-1}}R_k\odot y_k \oplus R_n\odot{ _G\sum_{k=1}^{n-1}}y_k. 
\end{equation}
\[\text{Since~} {_G\sum_{k=1}^\infty} R_k\odot y_k \text{~is absolutely convergent and~} R_n\odot {_G\sum_{k=1}^{n-1}}y_k \rightarrow 1 \text{~as~} n\rightarrow \infty (\text{~Corollary \ref{Cor3}}),\]
 \[\text{the series~} {_G\sum_{k=1}^n} a_k\odot x_k \,\text{is convergent for each}\, x\in  sl_\infty^G(\Delta_G). \text{~This yields~} a\in \left( sl_\infty^G(\Delta_G)\right)^\beta. \]
 Therefore
$D_2 \subseteq \left( sl_\infty^G(\Delta_G)\right)^\beta.$\\
\[\text{Again let~}  a\in \left( sl_\infty^G(\Delta_G)\right)^\beta  \text{~then~} {_G\sum_{k=1}^\infty} a_k\odot x_k \text{~is convergent for each~} x\in  sl_\infty^G(\Delta_G). \text{~We take}\]
\begin{equation*}
x_k=
\begin{cases}
1, &\text{if $k=1$;}\\
e^k,&\text{if $k\geq 2.$}
\end{cases}
\end{equation*}
\[\text{Thus~} {_G\sum_{k=1}^\infty} e^k\odot x_k \, \text{is convergent. This implies~} e^n\odot R_n=O(e) (\text{~Corollary \ref{Cor3}}).\]
\[\text{Using (\ref{Eqn3}) we get~~} {_G\sum_{k=1}^\infty} a_k\odot x_k= \ominus {_G\sum_{k=1}^\infty} R_k\odot y_k \text{~converges for all~} y\in l_\infty^G. \text{~So we have}\] 
\[{_G\sum_{k=1}^\infty}|R_k|^G <\infty\, \text{~and~} a\in D_2. \]
Therefore
\[\left( sl_\infty^G(\Delta_G)\right)^\beta=D_2.\]
$(iii)$ The proof of this part is same as above.
\end{proof}
\section{Some applications of Geometric Difference}
In this section we find the Geometric Newton-Gregory interpolation formulae and solve some numerical problems using these new formulae. 
\begin{description}
\item[Geometric Factorial]Let us define geometric factorial notation $!_G$ as
\[n!_G=e^n\odot e^{n-1}\odot e^{n-2}\odot \cdots \odot e^2\odot e =e^{n!}.\]
For example,
\begin{align*}
0!_G &=e^{0!}=e^0=1\\
1!_G &=e^{1!}=e=2.71828\\
2!_G &=e^{2!}=e^2=7.38906\\
3!_G &=e^{3!}=e^6=4.03429 \times 10^2\\
4!_G &=e^{4!}=e^{24}=2.64891 \times 10^{10}\\
5!_G &=e^{5!}=e^{120}=1.30418 \times 10^{52}\quad \text{etc.}
\end{align*}
\item[Generalized Geometric Forward Difference Operator]Let
\begin{align*}
\Delta_G f(a)   &= f(a\oplus h) \ominus f(a).\\
\Delta^2_G f(a) &= \Delta_G f(a\oplus h) \ominus \Delta_G f(a)\\
                &= \{f(a\oplus e^2\odot h) \ominus f(a\oplus h)\}\ominus \{f(a\oplus h) \ominus f(a)\}\\                
								&=f(a\oplus e^2\odot h) \ominus e^2 \odot f(a\oplus h)\oplus f(a).\\
\Delta^3_G f(a) &= \Delta^2_G f(a\oplus h) \ominus \Delta^2_G f(a)\\
                &=\{f(a\oplus e^3\odot h) \ominus e^2 \odot f(a\oplus e^2\odot h)\oplus f(a \oplus h)\}\\
								&\qquad \ominus \{f(a\oplus e^2\odot h) \ominus e^2 \odot f(a\oplus h)\oplus f(a)\}\\
								&=f(a\oplus e^3\odot h) \ominus e^3 \odot f(a\oplus e^2\odot h)\oplus e^3\odot f(a \oplus h) \ominus f(a).
\end{align*}
Thus, $n^{\text{th}}$ geometric forward difference is
\[\Delta^n_G f(a)= _G\sum^n_{k=0} (\ominus e )^{{k}_G}\odot e^{\binom{n}{k}}\odot f(a\oplus e^{n-k}\odot h), \text{with}\, (\ominus e)^{0_G}=e.\]
\item[Generalized Geometric Backward Difference Operator] Let
\begin{align*}
\nabla_G f(a)   &=f(a) \ominus f(a\ominus h).\\
\nabla^2_G f(a) &= \nabla_G f(a) \ominus \nabla_G f(a\ominus h)\\
                &= \{f(a)\ominus f(a \ominus h)\} \ominus \{f(a\ominus h) \ominus f(a\ominus e^2\odot h)\}\\
								&= f(a)\ominus  e^2 \odot f(a \ominus h)\oplus f(a \ominus e^2 \odot h).\\
\nabla^3_G f(a) &= \nabla^2_G f(a) \ominus \nabla^2_G f(a-h)\\
                &=\{f(a)\ominus  e^2 \odot f(a \ominus h)\oplus f(a \ominus e^2 \odot h)\}\\
								&\qquad \ominus \{f(a\ominus h)\ominus  e^2 \odot f(a \ominus e^2 \odot h)\oplus f(a \ominus e^3 \odot h)\}\\
								&=f(a) \ominus e^3 \odot f(a \ominus h)\oplus e^3\odot f(a \ominus e^2 \odot h) \ominus f(a\ominus e^3 \odot h).
\end{align*}
Thus, $n^{\text{th}}$ geometric backward difference is
\[\nabla^n_G f(a)= _G\sum^n_{k=0} (\ominus e )^{{k}_G}\odot e^{\binom{n}{k}}\odot f(a\ominus e^k\odot h).\]
\item[Factorial Function]\textit{The product of n consecutive factors each at a constant\\
 geometric difference, h, the first factor being x is called a factorial function of degree n and is denoted by $x^{(n_G)}.$}Thus
\[x^{(n_G)}=x\odot (x \ominus e\odot h)\odot(x \ominus e^2\odot h)\odot(x \ominus e^3\odot h)\odot\cdots \odot (x \ominus e^{n-1}\odot h).\]
In particular, for $h=e,$
\[x^{(n_G)}=x\odot (x \ominus e)\odot (x \ominus e^2)\odot (x \ominus e^3)\odot\cdots \odot (x \ominus e^{n-1}).\]
\end{description}
\textbf{Geometric Newton-Gregory Forward Interpolation Formula:}
Let $y=f(x)$ be a function which takes the values $f(a),f(a\oplus h), f(a\oplus e^2\odot h), f(a\oplus e^3\odot h),......,f(a\oplus e^n\odot h)$ for the $n+1$ geometrically equidistant values (which form a Geometric Progression in ordinary sense) $a, a\oplus h, a\oplus e^2\odot h, a\oplus e^3\odot h,......, a\oplus e^n\odot h$ of the independent variable $x$ and let $P_n(x)$ be a geometric polynomial in $x$ of degree $n$ defined as:
\begin{gather}
\begin{aligned}\label{eqn10}
P_n(x)=& A_0\oplus A_1 \odot (x\ominus a)\oplus A_2\odot (x\ominus a)\odot(x\ominus a\ominus h)\\
      &\oplus A_3\odot(x\ominus a)\odot(x\ominus a \ominus h)\odot(x\ominus a\ominus e^2\odot h)\oplus\cdots\\
			&\oplus A_n \odot (x\ominus a)\odot(x\ominus a\ominus h)\odot\cdots \odot(x\ominus a\ominus e^{n-1}\odot h).
\end{aligned}
\end{gather}
We choose the coefficients $A_0, A_1, A_2,....,A_n$ such that\\
 $P_n(a)=f(a),P_n(a\oplus h)=f(a\oplus h), P_n(a \oplus e^2\odot h)=f(a \oplus e^2\odot h),.... ,P_n(a \oplus e^n\odot h)=f(a \oplus e^n\odot h).$

Putting $x= a, a\oplus h, a\oplus e^2\odot h, a\oplus e^3\odot h,......, a\oplus e^n\odot h$ in (\ref{eqn10}) and then also putting the values of $P_n(a), P_n(a\oplus h),......., P_n(a\oplus e^n\odot h),$ we get
\[f(a)=A_0\implies A_0=f(a).\]
\[f(a\oplus h)=A_0\oplus A_1\odot h \implies A_1=\frac{f(a\oplus h) \ominus f(a)}{h}G=\frac{\Delta_G f(a)}{h}G.\]
\begin{align*}
f(a\oplus e^2\odot h)&=A_0\oplus e^2\odot h\odot A_1 \oplus e^2\odot h\odot h\odot A_2\\
\implies A_2         &=\frac{f(a\oplus e^2\odot h) \ominus e^2\odot [f(a\oplus h)\ominus f(a)]\ominus f(a)}{e^2\odot h^{2_G}}G\\
                     &= \frac{f(a\oplus e^2\odot h) \ominus e^2\odot f(a\oplus h)\oplus f(a)}{2!_G\odot h^{2_G}}G\\
										 &=\frac{\Delta^2_G f(a)}{2!_G\odot h^{2_G}}G.\\
\text{Similarly}\quad A_3 &=\frac{\Delta^3_G f(a)}{3!_G\odot h^{3_G}}G\\
             \cdots  &\quad \cdots \quad \cdots \quad \cdots\\
						    A_n  &=\frac{\Delta^n_G f(a)}{n!_G\odot h^{n_G}}G.
\end{align*}
Putting the values of $A_0, A_1, A_2,....,A_n$ found above in (\ref{eqn10}), we get
\begin{align*}
P_n(x)=& f(a)\oplus \frac{\Delta_G f(a)}{h}G \odot (x\ominus a)\oplus \frac{\Delta^2_G f(a)}{2!_G\odot h^{2_G}}G\odot (x\ominus a)\odot(x\ominus a\ominus h)\\
      &\oplus \frac{\Delta^3_G f(a)}{3!_G\odot h^{3_G}}G\odot(x\ominus a)\odot(x\ominus a \ominus h)\odot(x\ominus a\ominus e^2\odot h)\oplus\cdots\\
			&\oplus \frac{\Delta^n_G f(a)}{n!_G\odot h^{n_G}}G \odot (x\ominus a)\odot(x\ominus a\ominus h)\odot\cdots \odot(x\ominus a\ominus e^{n-1}\odot h).
\end{align*}
This is the Geometric Newton-Gregory forward interpolation formula.

Putting ${\frac{x\ominus a}{h}}G= u$ or $x=a \oplus h\odot u,$ formula takes the form
\begin{gather}
\begin{aligned}\label{eqn11}
P_n(x)=& f(a)\oplus u\odot \Delta_G f(a) \oplus \frac{u\odot(u\ominus e)}{2!_G}G\odot \Delta^2_G f(a)\\
      &\oplus \frac{u\odot(u\ominus e)\odot (u \ominus e^2)}{3!_G}G \odot \Delta^3_G f(a)\oplus\cdots\\
			&\oplus \frac{u\odot (u \ominus e)\odot(u \ominus e^2)\odot \cdots \odot (u \ominus e^{n-1})}{n!_G}G \odot \Delta^n_G f(a).
\end{aligned}
\end{gather}
The result (\ref{eqn11}) can be written as
\begin{align*}
P_n(x)=P_n(a\oplus h\odot u)=&f(a)\oplus u^{(1_G)}\odot \Delta_G f(a) \oplus \frac{u^{(2_G)}}{2!_G}G\odot \Delta^2_G f(a)\oplus \frac{u^{(3_G)}}{3!_G}G \odot \Delta^3_G f(a)\oplus \cdots \\
			                       & \cdots \oplus \frac{u^{(n_G)}}{n!_G}G \odot \Delta^n_G f(a).
\end{align*}
where $u^{(n_G)}=u\odot (u \ominus e)\odot (u \ominus e^2)\odot\cdots \odot (u \ominus e^{n-1}).$
\begin{example} Given,$f(x)=f(e^t)=\sin(e^t).$ From the following table, find $\sin(e^{1.3})$ using geometric forward interpolation formula.\\[2ex]
\begin{center}
\begin{tabular}{|c| c| c| c|c|} 
\hline
$x$  & $e$ &$e^{1.2}$  & $e^{1.4}$& $e^{1.6}$\\ [0.5ex]
\hline 
$f(x)$&$0.0474$& $0.0579$&$0.0707$&$0.0863$\\[1ex] 
\hline 
\end{tabular}
\end{center}
\noindent \textbf{Solution.}
The geometric difference table for given data is as follows:\\[1.5ex]
\begin{center}
\begin{tabular}{|c| c| c| c| c|} 
\hline
$x$      & $f(x)$ & $\Delta_G f(x)$  & $\Delta^2_G f(x)$& $\Delta^3_G f(x)$\\ [1.5ex]
\hline 
$e$      & 0.0474  &                  &                  & \\
         &         &1.2215            &                  & \\
$e^{1.2}$& 0.0579  &                  & 0.9997           & \\
         &         &1.2211            &                  & 0.9999 \\
$e^{1.4}$& 0.0707  &                  & 0.9996           & \\
         &         &1.3306            &                  & \\
$e^{1.6}$& 0.0863  &                  &                  & \\
\hline 
\end{tabular}
\end{center}
We have to calculate
\begin{align*} 
f(e^{1.3})&=f(a\oplus u\odot h),\, \text{say}.\\
\therefore \quad  a\oplus u\odot h&= e^{1.3}\\
\Rightarrow e\oplus u \odot e^{0.2}&= e^{1.3}, \quad(\text{here} ~ h=e^{1.2}\ominus e=e^{0.2})\\
u&= \frac{e^{1.3} \ominus e}{e^{0.2}}G\\
&=\left(e^{0.3}\right)^\frac{1}{0.2}\\
& =e^{1.5}
\end{align*}
By Geometric Newton-Gregory forward interpolation formula we get
\begin{align*}
f(a\oplus u\odot h)&=f(a)\oplus u\odot \Delta_G f(a) \oplus \frac{u\odot (u \ominus e)}{e^2}G\odot \Delta^2_G f(a)\\
       &\quad\oplus \frac{u\odot (u \ominus e)\odot (u \ominus e^2)}{e^6}G \odot \Delta^3_G f(a)\\
f(e^{1.3}) &= f(e)\oplus \{e^{1.5}\odot \Delta_G f(e)\} \oplus \{\frac{e^{1.5}\odot (e^{1.5}\ominus e)}{e^2}G\odot \Delta^2_G f(e)\}\\
      & \quad \oplus \{\frac{e^{1.5}\odot (e^{1.5}\ominus e)\odot (e^{1.5}\ominus e^2)}{e^6}G \odot \Delta^3_G f(e)\}\\
			&= 0.0474\oplus \{e^{1.5}\odot 1.2215\}\oplus \{\frac{e^{1.5}\odot e^{0.5}}{e^2}G\odot 0.9997\}\\
      & \quad \oplus \{\frac{e^{1.5}\odot e^{0.5}\odot e^{-0.5}}{e^6}G \odot 0.9999\}\\
		  &=0.0474\oplus (1.2215)^{1.5} \oplus (0.9997)^{0.325} \oplus (0.9999)^{\frac{1}{0.0625}}\\
			&=0.0474\oplus 1.3500 \oplus 0.9999 \oplus 0.9984\\
			&=0.0474 \times 1.3500 \times 0.9999 \times 0.9984\\
			&=0.0639
\end{align*}
Thus $\sin(e^{1.3})=0.0639.$
\end{example}
\textbf{Note:} It is to be noted that $e^x\odot e^y=e^{xy},e^x\oplus e^y=e^{x+y}, x\oslash e^y=x^{\frac{1}{y}}.$
\vspace{0.3cm}\\
\textbf{Geometric Newton-Gregory Backward Interpolation Formula:}
Let $y=f(x)$ be a function which takes the values $f(a\oplus e^n\odot h),f(a\oplus e^{n-1}\odot h), f(a\oplus e^{n-2}\odot h), f(a\oplus e^{n-3}\odot h),......,f(a)$ for the $n+1$ geometrically equidistant values $a\oplus e^n\odot h, a\oplus e^{n-1}\odot h, a\oplus e^{n-2}\odot h, a\oplus e^{n-3}\odot h,......, a$ of the independent variable $x$ and let $P_n(x)$ be a geometric polynomial in $x$ of degree $n$ defined as:
\begin{gather}
\begin{aligned}\label{eqn12}
P_n(x)=& A_0\oplus A_1 \odot (x\ominus a\ominus e^n\odot h)\oplus A_2\odot (x\ominus a \ominus e^n\odot h)\odot(x\ominus a \ominus e^{n-1}\odot h)\\
      &\oplus A_3\odot(x\ominus a \ominus e^n\odot h)\odot(x\ominus a \ominus e^{n-1} \odot h)\odot(x\ominus a\ominus e^{n-2}\odot h)\oplus\cdots\\
			&\oplus A_n \odot (x\ominus a \ominus e^n\odot h)\odot(x\ominus a \ominus e^{n-1} \odot h)\odot\cdots \odot(x\ominus a\ominus h).
\end{aligned}
\end{gather}
where $A_0, A_1, A_2,......,A_n$ are constants which are to be determined so as to make
\[P_n(a\oplus e^n\odot h)=f(a\oplus e^n\odot h), P_n(a\oplus e^{n-1}\odot h)=f(a\oplus e^{n-1}\odot h),..., P_n(a)=f(a)\]
Putting $x=a\oplus e^n\odot h, a\oplus e^{n-}\odot h,$.... in (\ref{eqn12}) and also putting\\
 $P_n(a\oplus e^n\odot h)=f(a\oplus e^n\odot h),$....., we get
\begin{align*}
A_0&=f(a\oplus e^n\odot h)\\
A_1&=\frac{\nabla_G f(a\oplus e^n\odot h)}{h}G\\
A_2&=\frac{\nabla^2_G f(a\oplus e^n\odot h)}{2!_G\odot h^{2_G}}G\\
A_3&=\frac{\nabla^3_G f(a\oplus e^n\odot h)}{3!_G\odot h^{3_G}}G\\
....&\quad .............................\\
A_n&=\frac{\nabla^n_G f(a\oplus e^n\odot h)}{n!_G\odot h^{n_G}}G
\end{align*}
Substituting the values of $A_0, A_1, A_2,....$ in (\ref{eqn12}), we get
\begin{gather}
\begin{aligned}\label{eqn13}
P_n(x)=& f(a\oplus e^n\odot h)\oplus \frac{\nabla_G f(a\oplus e^n\odot h)}{h}G \odot (x\ominus a\ominus e^n\odot h)\\
&\oplus \frac{\nabla^2_G f(a\oplus e^n\odot h)}{2!_G\odot h^{2_G}}G\odot (x\ominus a \ominus e^n\odot h)\odot(x\ominus a \ominus e^{n-1}\odot h)\\
      &\oplus \frac{\nabla^3_G f(a\oplus e^n\odot h)}{3!_G\odot h^{3_G}}G\odot(x\ominus a \ominus e^n\odot h)\odot(x\ominus a \ominus e^{n-1} \odot h)\odot(x\ominus a\ominus e^{n-2}\odot h)\oplus\cdots\\
			&\oplus \frac{\nabla^n_G f(a\oplus e^n\odot h)}{n!_G\odot h^{n_G}}G \odot (x\ominus a \ominus e^n\odot h)\odot(x\ominus a \ominus e^{n-1} \odot h)\odot\cdots \odot(x\ominus a\ominus h).
\end{aligned}
\end{gather}
This is the Geometric Newton-Gregory backward interpolation formula.

Putting $u=\frac{x\ominus (a\oplus e^n\odot h)}{h}G$ or $x=a\oplus e^n\odot h\oplus u\odot h,$ we get
\begin{gather}
\begin{aligned}\label{eqn14}
P_n(x)=&=P_n(a\oplus e^n\odot h\oplus u\odot h)= f(a\oplus e^n\odot h)\oplus u \odot \nabla_G f(a\oplus e^n\odot h)\\
&\oplus \frac{u\odot(u \oplus e)}{2!_G}G\odot \nabla^2_G f(a\oplus e^n\odot h)\\
      &\oplus \frac{u\odot(u \oplus e)\odot(u \oplus e^2)}{3!_G}G\odot \nabla^3_G f(a\oplus e^n\odot h)\oplus\cdots\\
			&\oplus \frac{u\odot(u \oplus e)\odot(u \oplus e^2)\odot \cdots \odot (u \oplus e^{n-1})}{n!_G}G \odot \nabla^n_G f(a\oplus e^n\odot h).
\end{aligned}
\end{gather}
\begin{example} Given,$f(x)=\ln (x).$ From the following table, find $\ln(22)$ using geometric backward interpolation formula.\\[3ex]
\begin{center}
\begin{tabular}{|c| c| c| c|c|} 
\hline
$x$  & 3 & 6  & 12 & 24\\ [0.5ex]
\hline 
$f(x)$& 1.0986 & 1.7918 & 2.4849 & 3.1781\\[1ex] 
\hline 
\end{tabular}
\end{center}
\textbf{Solution.}
The geometric difference table for given data is as follows:\\[1ex]
\begin{center}
\begin{tabular}{|c| c| c| c| c|} 
\hline
$x$& $f(x)$ & $\nabla_G f(x)$  & $\nabla^2_G f(x)$& $\nabla^3_G f(x)$\\ [1.5ex]
\hline 
3  & 1.0986  &                  &                  & \\
         &         &1.6310            &                  & \\
6  & 1.7918  &                  & 0.8503           & \\
         &         &1.3868            &                  & 1.0847 \\
12 & 2.4849  &                  & 0.9223           & \\
         &         &1.2790            &                  & \\
24 & 3.1781  &                  &                  & \\
\hline 
\end{tabular}
\end{center}
We have to compute
\begin{align*}
f(22)&=f(a\oplus e^n\odot h\oplus u\odot h),\, \text{say}.\\
\therefore \quad  a\oplus e^n\odot h\oplus u\odot h &= 22\\
\Rightarrow 24\oplus u\odot h&= 22, \quad(\text{here} ~ h=6\ominus 3= 2)\\
u&= \frac{22 \ominus 24}{2}G\\
&=\left(0.9167\right)^\frac{1}{\ln 2}\\
& =0.8820.
\end{align*}
By Geometric Newton-Gregory backward interpolation formula we get
\begin{align*}
f(22)&=f(24)\oplus u \odot \nabla_G f(24) \oplus \frac{u\odot(u \oplus e)}{2!_G}G\odot \nabla^2_Gf(24)\\
        & \quad \oplus \frac{u\odot(u \oplus e)\odot(u \oplus e^2)}{3!_G}G\odot \nabla^3_Gf(24)\\
		 &= 3.1781\oplus \{0.8820 \odot 1.2790\}  \oplus \{\frac{0.8820\odot(0.8820 \oplus e)}{e^2}G\odot 0.9223\} \\
      &\quad \oplus \{\frac{0.8820\odot(0.8820 \oplus e)\odot(0.8820 \oplus e^2)}{e^6}G\odot 1.0847\}\\
		 &= 3.1781\oplus 0.9696 \oplus \{0.9466\odot 0.9223\}\oplus \{0.9663 \odot 1.0847\}\\
		 &=3.1781 \oplus 0.9696 \oplus 1.0045 \oplus 0.9972\\
		 &= 3.0867
\end{align*}
Therefore $\ln (22)= 3.0867.$
\end{example}
\textbf{Note:} Since small change in $x$ results large change in $e^x.$ So, for better accuracy, values should be taken up to maximum possible decimal places.


\textbf{Advantages of Geometric Interpolation Formulae over Ordinary Interpolation Formulae:} All the ordinary interpolation formulae are based upon the fundamental assumption that the data is expressible or can be expressed as a polynomial function with fair degree of accuracy. But geometric interpolation formulae have no such restriction. Because geometric interpolation formulae are based on geometric polynomials which are not polynomials in ordinary sense. So geometric interpolation formulae can be used to generate  transcendental functions, mainly to compute exponential and logarithmic functions. Also geometric forward and backward interpolation formulae are based on the values of the argument that are geometrically equidistant but need not be equidistant like classical interpolation formulae.

\section{Conclusion}
In this paper, we have defined geometric difference sequence space and obtained the Geometric Newton-Gregory interpolation formulae. Our main aim is to bring up geometric calculus to the attention of researchers in the branch of numerical analysis and to demonstrate its usefulness. 
We think that geometric calculus may especially be useful as a mathematical tool
for economics, management and finance.
\thebibliography{00}
\bibitem{BashirovRiza} A. Bashirov, M. R\i za,  \textit{On Complex multiplicative differentiation}, TWMS J. App. Eng. Math. 1(1)(2011), 75-85.
\bibitem{BashirovMisirh} A. E. Bashirov, E. M\i s\i rl\i, Y. Tando\v{g}du, A.  \"{O}zyap\i c\i, \textit{On modeling with multiplicative differential equations}, Appl. Math. J. Chinese Univ., 26(4)(2011), 425-438.
\bibitem{BashirovKurpinar} A. E. Bashirov, E. M. Kurp\i nar, A. \"{O}zyapici, \textit{Multiplicative Calculus and its applications}, J. Math. Anal. Appl., 337(2008), 36-48.
\bibitem{CakmakBasar} A. F. \c{C}akmak, F.  Ba\c{s}ar,  \textit{On Classical sequence spaces and non-Newtonian calculus}, J. Inequal. Appl. 2012, Art. ID 932734, 12pp.
\bibitem{Garling67} D. J. H. Garling, \textit{The $\beta$- and $\gamma$-duality of sequence spaces}, Proc. Camb. Phil. Soc., 63(1967), 963-981.

\bibitem{Grossman83} M. Grossman, \textit{Bigeometric Calculus: A System with a scale-Free Derivative}, Archimedes Foundation, Massachusetts, 1983.
\bibitem{GrossmanKatz} M. Grossman, R. Katz, \textit{Non-Newtonian Calculus}, Lee Press, Piegon Cove, Massachusetts, 1972.

\bibitem{KadakEfe} U. Kadak and Hakan Efe, \textit{Matrix Transformation between Certain Sequence Spaces over the Non-Newtonian Complex Field}, The Scientific World Journal, Volume 2014, Article ID 705818, 12 pages.
\bibitem{kadak2} U. Kadak, Murat Kiri\c{s}\c{c}i  and A.F. \c{C}akmak \textit{On the classical paranormed
sequence spaces and related duals over the non-Newtonian complex field}
J. Function Spaces Appl., 2015
\bibitem{KADAK3} U. Kadak and Muharrem \"{O}zl\"{u}k,  \textit{Generalized Runge-Kutta method with
respect to non-Newtonian calculus}, Abst. Appl. Anal., Vol. 2015 (2015), Article ID 594685, 10 pages.
\bibitem{Kizmaz} H. Kizmaz, \textit{On Certain Sequence Spaces}, Canad. Math. Bull., 24(2)(1981), 169-176.
\bibitem{KotheToplitz69} G. K\"{o}the, Toplitz, \textit{Vector Spaces I}, Springer-Verlag, 1969.
\bibitem{KotheToplitz34} G. K\"{o}the, O. Toplitz, \textit{Linear Raume mit unendlichen koordinaten und Ring unendlichen Matrizen}, J. F. Reine u. angew Math., 171(1934), 193-226.
\bibitem{Maddox80} I.J. Maddox, \textit{Infinite Matrices of Operators}, Lecture notes in Mathematics, 786, Springer-Verlag(1980).
\bibitem{Stanley}
 D. Stanley, \textit{A multiplicative calculus}, Primus IX 4 (1999) 310-326.
 \bibitem{TekinBasar} S. Tekin, F. Ba\c{s}ar,   \textit{Certain Sequence spaces over the non-Newtonian complex field}, Abstr. Appl. Anal., 2013. Article ID 739319, 11 pages. 
 \bibitem{TurkmenBasar} Cengiz T\"{u}rkmen and F. Ba\c{s}ar, \textit{Some Basic Results on the sets of Sequences with Geometric Calculus}, Commun. Fac. Fci. Univ. Ank. Series A1. Vol G1. No 2(2012) Pages 17-34. 
\bibitem{Uzer10} A. Uzer, \textit{Multiplicative type Complex Calculus as an alternative to the classical calculus}, Comput. Math. Appl., 60(2010), 2725-2737.
\end{document}